\numberwithin{equation}{section}
\def\3bar{{|\hspace{-.02in}|\hspace{-.02in}|}}
\def\E{{\mathcal{E}}}
\def\T{{\mathcal{T}}}
\def\pT{{\partial T}}
\def\W{{\mathcal{W}}}
\def\bw{{\mathbf{w}}}
\def\bE{{\mathbf{E}}}
\def\bt{{\mathbf{t}}}
\def\bB{{\mathbf{B}}}
\def\bD{{\mathbf{D}}}
\def\bH{{\mathbf{H}}}
\def\bu{{\mathbf{u}}}
\def\bv{{\mathbf{v}}}
\def\bH{{\mathbf{H}}}
\def\bn{{\mathbf{n}}}
\def\bq{{\mathbf{q}}}
\def\bE{{\mathbf{E}}}
\def\be{{\mathbf{e}}}
\def\bw{{\mathbf{w}}}
\def\bf{{\mathbf{f}}}
\def\bQ{{\mathbf{Q}}}
\def\bj{{\mathbf{j}}}
\def\bphi{{\boldsymbol{\phi}}}
\def\bzeta{{\boldsymbol{\zeta}}}
\def\bphi{{\boldsymbol{\phi}}}
\def\ljump{{[\![}}
\def\rjump{{]\!]}}
\newtheorem{remark}{Remark}[section]
\newtheorem{algorithm}{Weak Galerkin Algorithm}[section]
\def\ad#1{\begin{aligned}#1\end{aligned}}  \def\b#1{\mathbf{#1}} 
 \def\an#1{\begin{align}#1\end{align}} 
\def\p#1{\begin{pmatrix}#1\end{pmatrix}}
\title{Efficient Weak Galerkin Finite Element Methods for Maxwell Equations on polyhedral Meshes without Convexity Constraints}
\begin{document}
 \author{
 Chunmei Wang \thanks{Department of Mathematics, University of Florida, Gainesville, FL 32611, USA (chunmei.wang@ufl.edu). The research of Chunmei Wang was partially supported by National Science Foundation Grant DMS-2136380.}
  \and
 Shangyou Zhang\thanks{Department of Mathematical Sciences,  University of Delaware, Newark, DE 19716, USA (szhang@udel.edu).  }
 }

\maketitle
\begin{abstract}
This paper presents an efficient weak Galerkin (WG) finite element method with reduced stabilizers for solving the time-harmonic Maxwell equations on both convex and non-convex polyhedral meshes. By employing bubble functions as a critical analytical tool, the proposed method enhances efficiency by partially eliminating the stabilizers traditionally used in WG methods.
This streamlined WG method demonstrates stability and effectiveness on convex and non-convex polyhedral meshes, representing a significant improvement over existing stabilizer-free WG methods, which are typically limited to convex elements within finite element partitions.
 The method achieves an optimal error estimate for the exact solution in a discrete 
$H^1$
 norm, and additionally, an optimal 
$L^2$
 error estimate is established for the WG solution. Several numerical experiments are conducted to validate the method's efficiency and accuracy.
\end{abstract}

\begin{keywords} 
  weak Galerkin, finite element method, Maxwell equations, bubble function, reduced stabilizers, non-convex, polyhedral meshes.
\end{keywords}

\begin{AMS}
Primary, 65N30, 65N15, 65N12, 74N20; Secondary, 35B45, 35J50,
35J35
\end{AMS}

\pagestyle{myheadings}

\section{Introduction}
In this paper, we aim to develop an efficient weak Galerkin finite element method with reduced stabilizers specifically designed for solving the Maxwell equations without convexity constraints. The Maxwell equations are coupled magnetic and electric equations described by:
\begin{equation}\label{model} 
\begin{split}
\nabla\times\bE=&-\frac{\partial\bB}{\partial t},\\
\nabla\times\bH=&\frac{\partial\bD}{\partial t}+\bj,\\
\nabla\cdot\bD=&\rho,\\
\nabla\cdot\bB=&0,
\end{split}
\end{equation}
along with the constitutive relations:
$$
\bB=\mu\bH,\qquad \bj=\sigma\bE, \qquad\bD=\epsilon\bE.
$$
Here $\bE$ represents the electric field intensity, $\bH$ is the magnetic field
intensity,  $\bD$ is the displacement current density, $\bB$ denotes  the magnetic flux density,    $\bj$  signifies the electric current density, $\mu$ is the
permeability, $\sigma$ is the electric conductivity, $\rho$ is the charge density,   and $\epsilon$ is the  permittivity.

For time-harmonic fields, where the time dependence is assumed to follow a harmonic form, i.e.,
$exp(i\omega t)$,  the Maxwell equations \eqref{model} can be re-expressed for the Fourier transform of the fields by utilizing the constitutive relations as follows
\begin{equation} \label{model2}
\begin{split}
\nabla\times\bE=&-i\omega\mu\bH,\\
\nabla\times\bH=&i\omega\epsilon\bE+\sigma\bE,\\
\nabla\cdot(\epsilon\bE)=&\rho,\\
\nabla\cdot(\mu\bH)=&0.
\end{split}
\end{equation}
By applying the operator $\nabla\times \mu^{-1}$ to the first equation in \eqref{model2}
 and incorporating the second equation in \eqref{model2}, we derive
 \begin{equation}\label{model3}
  \nabla\times(\mu^{-1}\nabla\times\bE)=(\omega^2 \epsilon-i \omega\sigma)\bE.   
 \end{equation}
Thus, the electric field intensity $\bE$  can be solved using   \eqref{model3} and the third equation in \eqref{model2}, along with appropriate boundary conditions. Subsequently, the magnetic field intensity  $\bH$ can be determined using the second and fourth equations in \eqref{model2}, together with the boundary conditions.

For simplicity, we focus on the equations for the electric field intensity $\bE$ with $\omega=0$ in
a heterogeneous medium $\Omega\subset\mathbb R^3$. A generalized mixed formulation of this model problem
aims to find unknown vector-valued function  $\bu$ and scalar-valued $p$ that satisfy
\begin{equation} \label{model4}
\begin{split}
\nabla\times (\mu^{-1}\nabla\times \bu)-\epsilon\nabla p=&\bf_1, \qquad \text{in}\quad\Omega, \\
\nabla\cdot(\epsilon \bu)=&g_1, \qquad \text{in}\quad\Omega, \\
 \bu \times\bn=&\bphi, \qquad \text{on}\quad\partial\Omega, \\
  p=&0, \qquad \text{on}\quad\partial\Omega.
  \end{split}
\end{equation}

Over the past several decades, extensive research has been conducted on the Maxwell equations by various scholars. The 
$H(curl)$ conforming finite element method was initially introduced in \cite{wang2} and later advanced in \cite{wang3}. Discontinuous Galerkin (DG) finite element methods have also been developed for the Maxwell equations \cite{wang4, wang5, wang6, wang7, wang8}, including a notable mixed DG formulation introduced and analyzed in \cite{wang6}. Recently, advancements include a weakly over-penalized symmetric interior penalty method, as introduced and analyzed in \cite{wang9}. Additionally, weak Galerkin finite element methods have been proposed in \cite{wangmaxwell, mwg, yemaxwell}. Numerous other numerical methods have also been developed to discretize the Maxwell equations.

The weak Galerkin   finite element method is a modern numerical technique for solving partial differential equations (PDEs), where differential operators are reconstructed within a distribution framework for piecewise polynomials. The regularity requirements are mitigated through carefully designed stabilizers. The WG method, demonstrated in \cite{wg1, wg2, wg3, wg4, wg5, wg6, wg7, wg8, wg9, wg10, wg11, wg12, wg13, wg14, wangmaxwell, wg16,
wg17, wg18, wg19, wg20, wg21, itera, wy3655}, and notably the primal-dual weak Galerkin (PDWG) approach  \cite{pdwg1, pdwg2, pdwg3, pdwg4, pdwg5, pdwg6, pdwg7, pdwg8, pdwg9, pdwg10, pdwg11, pdwg12, pdwg13, pdwg14, pdwg15}, uses weak derivatives and weak continuities based on conventional weak forms of PDEs, ensuring stability and accuracy. PDWG addresses challenges in traditional numerical techniques by treating numerical solutions as constrained minimizations of functionals, leading to a symmetric scheme involving both primal and dual variables.

This paper introduces an efficient  weak Galerkin  finite element method for solving Maxwell's equations. Our contributions are summarized as follows:
 \textbf{1. Reduction of Stabilizers:} Compared to traditional stabilizer-dependent WG methods, our method reduces the need for certain stabilizers. Specifically, by utilizing higher-degree polynomials to compute the discrete weak curl, one stabilizer for the discrete weak curl is eliminated, while the stabilizer for the discrete weak gradient is retained. The proposed method maintains the size and global sparsity of the stiffness matrix, thereby reducing programming complexity compared to traditional stabilizer-dependent WG methods.
\textbf{2. Applicability to Non-Convex polyhedral Meshes:} Unlike existing stabilizer-free WG methods \cite{ye1, ye2}, which typically work only on convex elements, our method is applicable to both convex and non-convex elements within polyhedral meshes. This is achieved by employing bubble functions as a critical analytical tool, allowing us to circumvent the restrictive conditions commonly imposed by other stabilizer-free WG methods. This flexibility facilitates broader generalization to various  PDEs without the usual implementation challenges. Recently, one of the authors has proposed stabilizer-free WG methods that do not require the convexity assumption, applied to solving the Poisson  equation and the biharmonic equation  \cite{wangst1, wangst2}.  To the best of our knowledge, this paper is the first to propose  WG methods with reduced stabilizers for the Maxwell equations. 
Theoretical analysis demonstrates that our method achieves an optimal error estimate for WG approximations in a discrete 
$H^1$ norm, and an optimal order 
$L^2$ error estimate is established for the WG solution. Numerical experiments are conducted to validate the theoretical findings, showcasing the method's effectiveness in practical applications.

The paper is organized as follows: Section 2 derives a weak formulation for the Maxwell equations \eqref{model}. Section 3 reviews the discrete weak gradient and discrete weak curl operators. Section 4 introduces the simplified weak Galerkin algorithm for the Maxwell equations. In Section 5, we establish the existence and uniqueness of the solutions for the WG scheme. Section 6 focuses on deriving the error equations for the WG scheme. Section 7 provides an optimal order error estimate in a discrete  $H^1$ norm  for the WG approximations. Section 8 presents an optimal 
$L^2$ error estimate for the WG solution under certain regularity assumptions. Finally, Section 9 demonstrates the numerical performance of the WG algorithm through various test examples.

We follow the standard notations for Sobolev spaces and norms defined on a given open and bounded domain  $D\subset \mathbb{R}^3$ with Lipschitz continuous boundary.  Let $\|\cdot\|_{s,D}$, $|\cdot|_{s,D}$ and $(\cdot,\cdot)_{s,D}$ denote the norm, seminorm, and inner product in the Sobolev space $H^s(D)$ for any $s\ge 0$. The space $H^0(D)$ coincides with $L^2(D)$, the space of square-integrable functions, for which the norm and inner product are denoted by  $\|\cdot \|_{D}$ and $(\cdot,\cdot)_{D}$	
 , respectively. When $D=\Omega$ or when the domain of integration is clear from the context, we shall omit the subscript $D$ in the norm and the inner product notation. The symbol 
  $C$ denotes a generic constant independent of the meshsize and other physical or functional parameters. 


 \section{Weak Formulations}\label{Section:2}
In this section, we aim to derive the weak formulation for the Maxwell equations \eqref{model}. To this end, we first introduce the following  spaces:
$$
H(curl; \Omega)=\{\bu\in [L^2(\Omega)]^3: \nabla\times\bu\in [L^2(\Omega)]^3\},
$$  
$$
H_0(curl; \Omega) =\{\bu\in H(curl; \Omega): \bn\times \bu=0\ \text{on}\  \partial\Omega\},
$$
$$
H(div; \Omega)=\{\bu\in [L^2(\Omega)]^3: \nabla\cdot\bu\in L^2(\Omega)\},
$$  
$$
H_0^1(\Omega)=\{q\in H^1(\Omega): q=0\ \text{on}\ \partial\Omega\}, 
$$
 where $\bn$ denotes the unit outward normal vector to the boundary $\partial\Omega$.

By applying the standard integration by parts, we can derive a weak formulation for the Maxwell model equations \eqref{model4}. This weak formulation seeks 
$(\bu, p)\in H(curl; \Omega)\times H_0^1(\Omega)$ such that $\bu\times \bn=\bphi$ on $\partial \Omega$ and 
\begin{equation}\label{weakform}
    \begin{split}
         (\nu\nabla\times\bu, \nabla\times\bv) -(\bv, \nabla p)=&(\bf, \bv), \qquad \forall\bv\in H_0(curl; \Omega),\\
      (\bu, \nabla q)=&-(g, q), \qquad \forall q\in H_0^1(\Omega),
    \end{split}
\end{equation}
where 
$\nu=\frac{1}{\mu \epsilon}$, $\bf=\frac{\bf_1}{\epsilon}$, and $g=\frac{g_1}{\epsilon}$.

\section{Weak Differential Operators}\label{Section:Hessian}
The principal differential operators in the weak formulation (\ref{weakform}) for the Maxwell equations  (\ref{model}) are the gradient operator $\nabla$ and the  curl operator $\nabla \times$. In this section, we  briefly review the discrete weak gradient operator \cite{pdwg4, pdwg10} and the discrete weak curl operator \cite{wangmaxwell, wg9}.

Let $T$ be a polyhedral domain with boundary $\partial T$. A scalar-valued weak function on $T$ refers to $\sigma=\{\sigma_0,\sigma_b\}$  where $\sigma_0\in L^2(T)$ and $\sigma_b\in L^{2}(\partial T)$. Here $\sigma_0$ and $\sigma_b$  represent the values of $\sigma$ in the interior and on the boundary of $T$, respectively. Note that $\sigma_b$  may not necessarily be the trace of $\sigma_0$   on $\partial T$. Denote by $\W(T)$ the space of scalar-valued weak functions on $T$, i.e.,
\begin{equation*}\label{2.1}
\W(T)=\{\sigma=\{\sigma_0,\sigma_b\}: \sigma_0\in L^2(T), \sigma_b\in
L^{2}(\partial T)\}.
\end{equation*}
A vector-valued weak function on $T$ refers to  $\bv=\{\bv_0,\bv_b\}$  where  $\bv_0\in [L^2(T)]^3$ and $\bv_b\in
[L^{2}(\partial T)]^3$. Here,  $\bv_0$ and $\bv_b$ represent the values of $\bv$ in the interior and on the boundary of $T$, respectively.  Note that $\bv_b$  may not necessarily be the trace of $\bv_0$  on $\partial T$. Denote by $V(T)$ the space of vector-valued weak functions on $T$, i.e.,
\begin{equation*}
V(T)=\{\bv=\{\bv_0,\bv_b  \}: \bv_0\in [L^2(T)]^3,  \bv_b\in
[L^{2}(\partial T)]^3\}.
\end{equation*}
 
The weak gradient of $\sigma\in \W(T)$, denoted by $\nabla_w \sigma$, is defined as a linear functional on $[H^1(T)]^3$ such that
\begin{equation*}
(\nabla_w  \sigma,\boldsymbol{\psi})_T=-(\sigma_0,\nabla \cdot \boldsymbol{\psi})_T+\langle \sigma_b,\boldsymbol{\psi}\cdot \textbf{n}\rangle_{\partial T},
\end{equation*}
for all $\boldsymbol{\psi}\in [H^1(T)]^3$.

The weak curl  operator of any $\bv\in V(T)$, denoted by $\nabla_w\times\bv$,  is defined in the dual space of $H(curl; T)$. Its action on $\bq\in H(curl; T)$ is given by 
$$
(\nabla _w\times \bv, \bq)_T=(\bv_0, \nabla\times \bq)_T-\langle \bv_b\times\bn,  \bq\rangle_{\partial T}.
$$

The weak divergence operator of any $\bv\in V(T)$, denoted by $\nabla_w\cdot\bv$,  is defined in the dual space of $H^1(T)$. Its action on $q\in H^1(T)$ is given by 
$$
(\nabla _w\cdot\bv, q)_T=-(\bv_0, \nabla q)_T+\langle \bv_b\cdot\bn,   q\rangle_{\partial T}.
$$

Denote by $P_r(T)$ the space of polynomials on $T$ with degree no more than $r$. 
A discrete version of $\nabla_{w}\sigma$  for $\sigma\in \W(T)$, denoted by $\nabla_{w, r, T}\sigma$, is defined as the unique polynomial vector in $[P_r(T) ]^3$ satisfying
\begin{equation}\label{disgradient}
(\nabla_{w, r, T} \sigma, \boldsymbol{\psi})_T=-(\sigma_0, \nabla \cdot \boldsymbol{\psi})_T+\langle \sigma_b, \boldsymbol{\psi} \cdot \textbf{n}\rangle_{\partial T}, 
\end{equation}
 for all $\boldsymbol{\psi}\in [P_r(T)]^3$. The usual integration by parts gives:
 \begin{equation}\label{disgradient*}
 (\nabla_{w, r, T} \sigma, \boldsymbol{\psi})_T= (\nabla \sigma_0, \boldsymbol{\psi})_T-\langle \sigma_0- \sigma_b, \boldsymbol{\psi} \cdot \textbf{n}\rangle_{\partial T},  
 \end{equation}
 for any $\boldsymbol{\psi}\in [P_r(T)]^3$, provided that $\sigma_0\in H^1(T)$.

A discrete version of $\nabla_w\times\bv$ for $\bv\in V(T)$, denoted by $\nabla_{w, r, T}\times \bv$, is defined as the unique polynomial vector in $[P_r(T) ]^3$ satisfying
\begin{equation}\label{discurlcurl}
(\nabla_{w, r, T} \times  \bv, \bq)_T=(\bv_0,  \nabla\times  \bq)_T-\langle \bv_b\times\bn,  \bq\rangle_{\partial T}, 
\end{equation}
 for any $\bq \in [P_r(T)]^3$. The usual integration by parts yields:
\begin{equation}\label{discurlcurlnew}
\begin{split}
 (\nabla_{w, r, T} \times \bv, \bq)_T = (  \nabla\times \bv_0,     \bq)_T-\langle (\bv_b-\bv_0)\times\bn,  \bq\rangle_{\partial T}, 
\end{split} 
\end{equation}
 for any $\bq \in [P_r(T)]^3$, provided that $\bv_0\in H(curl; T)$.

A discrete version of $\nabla_w\cdot\bv$ for $\bv\in V(T)$, denoted by $\nabla_{w, r, T}\cdot\bv$, is defined as the unique polynomial  in $P_r(T)$ satisfying
\begin{equation}\label{disdiv}
(\nabla_{w, r, T} \cdot  \bv, q)_T=-(\bv_0,  \nabla   q)_T+\langle \bv_b\cdot\bn,  q\rangle_{\partial T}, 
\end{equation}
 for any $q \in P_r(T)$. The usual integration by parts yields:
\begin{equation}\label{disdivnew}
\begin{split}
 (\nabla_{w, r, T} \cdot \bv, q)_T = (  \nabla\cdot \bv_0,   q)_T-\langle (\bv_0-\bv_b)\cdot\bn,   q\rangle_{\partial T}, 
\end{split} 
\end{equation}
 for any $q \in P_r(T)$, provided that $\bv_0\in H(div; T)$.
\section{Weak Galerkin Algorithm}\label{Section:WGFEM}
Let ${\cal T}_h$ be a finite element partition of the domain $\Omega\subset\mathbb R^3$ consisting of shape-regular polyhedrons, as defined in \cite{wy3655}. Denote by ${\mathcal E}_h$ the set of  all   faces  in ${\cal T}_h$ , and let ${\mathcal E}_h^0={\mathcal E}_h \setminus
\partial\Omega$ represent the set of all interior faces.  Let $h_T$ denote the mesh size of each element $T\in {\cal T}_h$, and let 
 $h=\max_{T\in {\cal T}_h}h_T$ be the mesh size for the partition ${\cal T}_h$.

Let $k\geq 1$. Denote by
$W_k(T)$ the local discrete space of the scalar-valued weak functions, given by:
\an{\label{Wk}
W_k(T)=\{\{\sigma_0,\sigma_b\}:\sigma_0\in P_{k-1}(T),\sigma_b\in
P_k(e),e\subset \partial T\}.
  }
Furthermore, let $\bt_1$ and $\bt_2$ be two tangential unit vectors on face $e\in \mathcal{E}_h$.
Denote by
$V_k(T)$ the local discrete space of the vector-valued weak functions, given by:
\an{\label{Vk}
V_k(T)=\{\{\bv_0,\bv_b=v_1\bt_1+v_2\bt_2\}:\bv_0\in [P_k(T)]^3,  v_1, v_2\in
P_k(e),  e\subset \partial T\}.  }
By patching  $W_k(T)$ over all the elements $T\in {\cal T}_h$
through a common value $\sigma_b$ on the interior interface $\E_h^0$, we obtain the scalar-valued weak finite element space $W_h$:
$$
W_h=\big\{\{\sigma_0, \sigma_b\}:\{\sigma_0, \sigma_b\}|_T\in W_k(T), \forall T\in {\cal T}_h \big\}.
$$
The subspace of $W_h$ with vanishing boundary values on $\partial\Omega$  is denoted by  $W_h^0$: 
\begin{equation*}\label{W0}
W_h^0=\{\{\sigma_0, \sigma_b\}\in W_h: \sigma_b=0\ \text{on}\ \partial \Omega\}.
\end{equation*}
Similarly, by patching $V_k(T)$ over all elements $T\in {\cal T}_h$
through a common value $\bv_b$ on the interior interface $\E_h^0$, we obtain the vector-valued weak finite element space $V_h$:
$$
V_h=\big\{\{\bv_0,\bv_b\}:\{\bv_0,\bv_b\}|_T\in V_k(T), \forall T\in {\cal T}_h \big\}.
$$
The subspace of  $V_h$ with vanishing boundary values on $\partial\Omega$ is denoted by 
 $V_h^0$:
\begin{equation*}\label{V0}
V_h^0=\big\{\{\bv_0,\bv_b\}\in V_h: \bv_b\times\bn=0\ \text{on}\ \partial\Omega\big\}.
\end{equation*}

Let $r$  be an integer. For simplicity of notation and to avoid confusion,  for any $\sigma\in
W_h$ and $\bv\in V_h$, denote by $\nabla_{w}\sigma$ and $\nabla_w \times  \bv$ the discrete weak actions   $\nabla_{w, k, T}\sigma$ and  $\nabla_{w, r, T}  \times\bv$ computed using   (\ref{disgradient}) and \eqref{discurlcurl} on each element $T$, respectively:
$$
(\nabla_{w}\sigma)|_T= \nabla_{w, k, T}(\sigma|_T), \qquad \sigma\in W_h,
$$
 $$
 (\nabla_w\times  \bv)|_T= \nabla_{w, r, T}\times (\bv|_T), \qquad \bv\in V_h.
$$

For any $p, q\in W_h$ and $\bu, \bv\in V_h$, we introduce the
following bilinear forms:
\begin{eqnarray*}\label{EQ:local-stabilizer}
a(\bu, \bv)=&\sum_{T\in {\cal T}_h}a_T(\bu, \bv),\\
b(\bu, q)=&\sum_{T\in {\cal T}_h}b_T(\bu, q), \\
s(p, q)=&\sum_{T\in {\cal T}_h}s_T(p, q), 
\label{EQ:local-bterm}
\end{eqnarray*} 
where
\begin{equation*}
\begin{split}
a_T(\bu, \bv) =& (\nu\nabla_w\times\bu, \nabla_w\times \bv)_T,\\
b_T(\bu,q)=&(\bu_0, \nabla_w q)_T,\\
s_T (p, q)=& h_T\langle p_0-p_b, q_0-q_b\rangle_{\partial T}.  
 \end{split}
\end{equation*} 
 
The following is the simplified weak Galerkin scheme, which eliminates part of the stabilizers, for the Maxwell equations \eqref{model} based on the variational formulation \eqref{weakform}:
\begin{algorithm}\label{a-1}
Find $(\bu_h; p_h)\in V_h  \times W_{h}^0$, satisfying $\bu_b\times \bn=Q_b\bphi$ on $\partial\Omega$ and
\begin{eqnarray}\label{32}
 a(\bu_h, \bv)-b(\bv, p_h)&=& (\bf, \bv_0), \qquad \forall \bv\in V_{h}^0,\\
s(p_h, q) +b(\bu_h, q)&=&-(g, q_0),\qquad  \forall q\in W_h^0.\label{2}
\end{eqnarray}
\end{algorithm}

\section{Solution Existence and Uniqueness}
Recall that $\T_h$ is a shape-regular finite element partition of
the domain $\Omega$. For any $T\in\T_h$ and $\varphi\in H^{1}(T)$, the following trace inequality holds true \cite{wy3655}:
\begin{equation}\label{trace-inequality}
\|\varphi\|_{\pT}^2 \leq C
(h_T^{-1}\|\varphi\|_{T}^2+h_T\| \varphi\|_{1, T}^2).
\end{equation}
Furthermore, if $\varphi$ is a polynomial on $T$, the standard inverse inequality yields
\begin{equation}\label{trace}
\|\varphi\|_{\pT}^2 \leq Ch_T^{-1}\|\varphi\|_{T}^2.
\end{equation}
  
   For any $p=\{p_0, p_b\}\in W_h$, we define
 the following energy  norm 
  \begin{equation}\label{disnorma}
  \3bar p\3bar_{W_h}=\Big( \sum_{T\in {\cal T}_h} h_T\|p_0-p_b\|_{\partial T}^2\Big)^{\frac{1}{2}}.
 \end{equation}

For any $\bv=\{\bv_0, \bv_b\}\in V_h$, we define the energy norm  
\begin{equation}\label{3norm}
\3bar \bv\3bar_{V_h} =\Big(\sum_{T\in {\cal T}_h} \| \nabla_w \times \bv\|_T^2\Big)^{\frac{1}{2}},
\end{equation}
and the following discrete $H^1$ norm
\begin{equation}\label{disnorm}
\begin{split}
 \| \bv\|_{1,h} =&\Big(\sum_{T\in {\cal T}_h} \| \nabla \times\bv_0\|_T^2+h_T^{-1}\|\bv_0\times\bn-\bv_b\times\bn \|^2_{\partial T} \Big)^{\frac{1}{2}}.   
\end{split}
\end{equation}

 \begin{lemma} \label{normm}
 For $\bv=\{\bv_0, \bv_b\}\in V_h$, there exists a constant $C$ such that
 \begin{equation}\label{normeq1}
     \|\nabla\times \bv_0\|_T\leq C\|\nabla_w\times  \bv\|_T.
 \end{equation} 
\end{lemma}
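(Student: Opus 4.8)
The plan is to start from the integration-by-parts form \eqref{discurlcurlnew} of the discrete weak curl and then kill its boundary contribution with a bubble-function test function. Since $\bv_0\in[P_k(T)]^3\subset H(curl;T)$, \eqref{discurlcurlnew} gives, for every $\bq\in[P_r(T)]^3$,
\[
(\nabla_w\times\bv,\bq)_T=(\nabla\times\bv_0,\bq)_T-\langle(\bv_b-\bv_0)\times\bn,\bq\rangle_{\partial T}.
\]
The temptation is to take $\bq=\nabla\times\bv_0$ (legitimate whenever $r\ge k-1$), but this produces the identity $\|\nabla\times\bv_0\|_T^2=(\nabla_w\times\bv,\nabla\times\bv_0)_T+\langle(\bv_b-\bv_0)\times\bn,\nabla\times\bv_0\rangle_{\partial T}$, whose boundary term cannot be absorbed by $\|\nabla_w\times\bv\|_T$. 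Eliminating that term is the entire difficulty, and is exactly what a bubble function is for.

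My second step is to introduce a nonnegative bubble function $\phi_T$ on $T$ that vanishes on $\partial T$ and satisfies, for all $\bw\in[P_{k-1}(T)]^3$, the two estimates
\[
\|\bw\|_T^2\le C\int_T\phi_T\,|\bw|^2\,dx,\qquad \|\phi_T\bw\|_T\le C\|\bw\|_T.
\]
Taking $\bw=\nabla\times\bv_0$ and $\bq=\phi_T(\nabla\times\bv_0)$ — which belongs to $[P_r(T)]^3$ once $r$ is chosen large enough to accommodate the degree of $\phi_T$, precisely the high-degree curl reconstruction this method adopts — the factor $\phi_T$ makes $\bq$ vanish on $\partial T$, so the boundary term in the identity above disappears and
\[
(\nabla_w\times\bv,\,\phi_T(\nabla\times\bv_0))_T=\int_T\phi_T\,|\nabla\times\bv_0|^2\,dx\ge C^{-1}\|\nabla\times\bv_0\|_T^2.
\]
Bounding the left-hand side by the Cauchy–Schwarz inequality and the second bubble estimate gives $(\nabla_w\times\bv,\phi_T(\nabla\times\bv_0))_T\le C\|\nabla_w\times\bv\|_T\|\nabla\times\bv_0\|_T$; combining the two and cancelling one factor of $\|\nabla\times\bv_0\|_T$ yields \eqref{normeq1}.

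The heart of the matter, and the step I expect to be the main obstacle, is the construction of $\phi_T$ and the first (coercivity) estimate on a possibly non-convex polyhedron. On a simplex the product of the barycentric coordinates is a positive interior bubble and the estimate follows by scaling; on a convex polytope the normalized product of the face-distance functions works. For a non-convex $T$ this product changes sign in the interior, which is exactly why earlier stabilizer-free WG methods are restricted to convex elements. My proposal is to use instead the square of the product of the face-defining linear functions, $\phi_T=\big(\prod_i\ell_i\big)^2$ (suitably scaled by powers of $h_T$): this is nonnegative on all of $T$, vanishes on every face of $\partial T$, and has degree controlled by the (uniformly bounded) number of faces, so $r$ can be fixed once and for all. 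The coercivity estimate is then obtained by restricting the integral to an interior ball of radius comparable to $h_T$ on which $\phi_T$ admits a uniform lower bound, and transferring $\|\bw\|$ from that ball back to $T$ by the equivalence of norms on the finite-dimensional space $[P_{k-1}(T)]^3$ (a domain inverse inequality). Verifying that this lower bound is uniform over the shape-regular family — independent of how non-convex the elements are — is the delicate point; once it is in hand, the rest of the argument is Cauchy–Schwarz and \eqref{discurlcurlnew}.
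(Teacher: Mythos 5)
Your proposal is correct and follows essentially the same route as the paper: the test function $\bq=\Phi_B\,(\nabla\times\bv_0)$ with $\Phi_B=\prod_i l_i^2$ (degree $2N$, hence $r=2N+k-1$) kills the boundary term in \eqref{discurlcurlnew}, the domain inverse inequality gives coercivity of the weighted $L^2$ form, and Cauchy--Schwarz plus cancellation of one factor of $\|\nabla\times\bv_0\|_T$ finishes the argument. Even your closing remarks mirror the paper: the squared product of face linear functions is exactly what the paper uses to handle non-convex elements, with the unsquared product reserved for the convex case in a follow-up remark.
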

\begin{proof} 
Let  $T\in {\cal T}_h$ be a polyhedral element with $N$  faces denoted by $e_1, \cdots, e_N$. It is important to emphasis that the polyhedral element $T$  can be non-convex. On each  face $e_i$, we construct   a linear equation  $l_i(x)$ such that  $l_i(x)=0$ on  face $e_i$ as follows: 
$$l_i(x)=\frac{1}{h_T}\overrightarrow{AX}\cdot \bn_i, $$  where  $A=(A_1,   A_{2})$ is a given point on the  face $e_i$,  $X=(x_1,   x_2)$ is any point on the  face $e_i$, $\bn_i$ is the normal direction to the  face $e_i$, and $h_T$ is the size of the element $T$. 

The bubble function of  the element  $T$ can be  defined as 
 $$
 \Phi_B =l^2_1(x)l^2_2(x)\cdots l^2_N(x) \in P_{2N}(T).
 $$ 
 It is straightforward to verify that  $\Phi_B=0$ on the boundary $\partial T$.    The function 
  $\Phi_B$  can be scaled such that $\Phi_B(M)=1$ where   $M$  represents the barycenter of the element $T$. Additionally,  there exists a sub-domain $\hat{T}\subset T$ such that $\Phi_B\geq \rho_0$ for some constant $\rho_0>0$.

For $\bv=\{\bv_0, \bv_b\}\in V_h$,   letting $\bq=\Phi_B  \nabla \times \bv_0$  in \eqref{discurlcurlnew} where we can specify $r=2N+k-1$,  yields:
\begin{equation} \label{t12}
\begin{split}
  &(\nabla_{w} \times \bv, \Phi_B  \nabla \times \bv_0)_T\\ =&  (  \nabla\times \bv_0,    \Phi_B  \nabla \times \bv_0)_T-\langle (\bv_b-\bv_0)\times\bn,  \Phi_B  \nabla \times \bv_0\rangle_{\partial T} 
\\=&(\nabla\times \bv_0,    \Phi_B  \nabla \times \bv_0)_T,
\end{split} 
\end{equation}
where we used $\Phi_B=0$ on $\partial T$.

From the domain inverse inequality \cite{wy3655},  there exists a constant $C$ such that
\begin{equation}\label{t2}
(\nabla\times  \bv_0,   \Phi_B   \nabla \times  \bv_0)_T \geq C (\nabla\times \bv_0,   \nabla \times \bv_0)_T.
\end{equation} 

Applying the Cauchy-Schwarz inequality and using \eqref{t12}-\eqref{t2}, we have:
 \begin{equation*}
     \begin{split}
      (\nabla\times \bv_0,    \nabla \times \bv_0)_T&\leq C (\nabla_{w} \times  \bv, \Phi_B \nabla \times \bv_0)_T \\& \leq C
\|\nabla_w \times \bv\|_T \| \Phi_B \nabla \times  \bv_0\|_T\\
& \leq C
\|\nabla_w \times  \bv\|_T \|\nabla \times \bv_0\|_T,
     \end{split}
 \end{equation*}
which gives
 $$
 \|\nabla\times \bv_0\|_T\leq C\|\nabla_w \times  \bv\|_T.
 $$

This completes the proof of the lemma.
\end{proof}

\begin{remark}
   If the polyhedral element $T$  is convex, 
   the bubble function of  the element  $T$ in Lemma \ref{normm}  can be  simplified to
 $$
 \Phi_B =l_1(x)l_2(x)\cdots l_N(x).
 $$ 
It can be verified that there exists a sub-domain $\hat{T}\subset T$,  such that
 $ \Phi_B\geq\rho_0$  for some constant $\rho_0>0$,  and $\Phi_B=0$ on the boundary $\partial T$.   Lemma \ref{normm}   can be proved in the same manner using this simplified construction. In this case, we take $r=N+k-1$.  
\end{remark}

  We construct a face-based bubble function   $$\varphi_{e_k}= \Pi_{i=1, \cdots, N, i\neq k}l_i^2(x).$$ It can be verified that  (1) $\varphi_{e_k}=0$ on the  face $e_i$ for $i \neq k$, (2) there exists a subdomain $\widehat{e_k}\subset e_k$ such that $\varphi_{e_k}\geq \rho_1$ for some constant $\rho_1>0$.

\begin{lemma}\label{phi}
     For $\{\bv_0, \bv_b\} \in V_h$, let  $\bq=(\bv_b-\bv_0)\times\bn \varphi_{e_k}$, where $\bn$ is the unit outward normal direction to the  face  $e_k$. The following inequality holds:
\begin{equation}
  \|\bq\|_T ^2 \leq Ch_T \int_{e_k}((\bv_b-\bv_0)\times\bn)^2ds.
\end{equation}
\end{lemma}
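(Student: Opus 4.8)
The plan is to reduce the estimate to two ingredients: the uniform boundedness of the face bubble $\varphi_{e_k}$ on $T$, and a domain-to-face scaling inequality for the polynomial $(\bv_b-\bv_0)\times\bn$. First I would make precise the meaning of $\bq$ as a function on all of $T$. On $e_k$ the field $\chi:=(\bv_b-\bv_0)\times\bn$ is a vector-valued polynomial of degree at most $k$ in the face coordinates, since $\bv_b=v_1\bt_1+v_2\bt_2$ with $v_1,v_2\in P_k(e_k)$, the trace $\bv_0|_{e_k}$ lies in $[P_k(e_k)]^3$, and $\bt_1,\bt_2,\bn$ are constant on the planar face $e_k$. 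I would extend $\chi$ to $T$ as the polynomial that is constant along the normal direction $\bn$ (equivalently, depending only on the two in-face coordinates), so that $\bq=\chi\,\varphi_{e_k}$ is a genuine polynomial on $T$ of degree at most $k+2(N-1)$ whose trace on $e_k$ equals $(\bv_b-\bv_0)\times\bn$. This constant-in-normal extension is essential: if one instead kept the interior polynomial $\bv_0$ inside $T$, one could make the right-hand side vanish while $\bq$ does not, so the stated inequality forces this reading.

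Next I would bound the bubble. Each linear factor $l_i(x)=\frac1{h_T}\overrightarrow{AX}\cdot\bn_i$ satisfies $|l_i(x)|\le C$ for $x\in T$, because $|\overrightarrow{AX}|\le C h_T$ and $|\bn_i|=1$; hence $0\le\varphi_{e_k}=\Pi_{i\neq k}l_i^2(x)\le C$ uniformly on $T$. This gives immediately $\|\bq\|_T^2=\int_T|\chi|^2\varphi_{e_k}^2\,dx\le C\int_T|\chi|^2\,dx=C\|\chi\|_T^2$, reducing the claim to the scaling inequality $\|\chi\|_T^2\le Ch_T\,\|\chi\|_{e_k}^2$.

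Finally I would prove this scaling step, which I expect to be the main obstacle, especially for non-convex $T$. Since $\chi$ is constant along $\bn$, Fubini's theorem rewrites $\int_T|\chi|^2\,dx$ as an integral over the orthogonal projection $R$ of $T$ onto the plane of $e_k$, weighted by the fiber length, which is bounded by the normal extent $\le C h_T$ of $T$; thus $\|\chi\|_T^2\le Ch_T\int_R|\chi|^2\,ds$. The difficulty is that for a non-convex element $R$ may be strictly larger than $e_k$ and irregularly shaped, so one cannot simply integrate over a cylinder $e_k\times(0,h_T)$. To close the argument I would invoke the shape-regularity of $\T_h$ from \cite{wy3655}: $T$ is contained in a ball of radius $Ch_T$, so $R$ lies in a disk of radius $Ch_T$ in the plane of $e_k$, while the face $e_k$ itself contains a disk of radius $ch_T$. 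Both $R$ and $e_k$ are therefore comparable to a ball of size $h_T$, and the equivalence of $L^2$ norms on the finite-dimensional space $[P_k]^3$ over such comparable shape-regular sets (after affine scaling to a reference configuration) yields $\int_R|\chi|^2\,ds\le C\int_{e_k}|\chi|^2\,ds$, with $C$ depending only on $k$, $N$, and the shape-regularity parameters. Combining the three displays completes the proof.
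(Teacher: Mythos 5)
Your proposal is correct and follows essentially the same route as the paper's proof: the paper likewise extends $\bv_b$ and the trace of $\bv_0$ from $e_k$ into $T$ by composing with the (affine) orthogonal projection onto the face plane---exactly your constant-along-normal extension---then bounds the volume integral by $Ch_T$ times the face integral using the boundedness of $\varphi_{e_k}$ and a Fubini-type projection argument. The only difference is one of explicitness: where the paper compresses the non-convex case into the phrase ``applied the properties of the projection,'' you spell out the real issue (the projection of $T$ onto the face plane may strictly contain $e_k$) and close it with an equivalence of $L^2$ norms for $[P_k]^3$ on shape-regular comparable sets, which is a legitimate and welcome sharpening of the same argument rather than a different proof.
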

\begin{proof}
 We first extend $\bv_b$, initially defined on the two-dimensional  face  $e_k$, to the entire d-dimensional polyhedral element $T$  using  the following formula:
$$
 \bv_b (X)= \bv_b(Proj_{e_k} (X)),
$$
where $X=(x_1,\cdots,x_3)$ is any point in the  element $T$, $Proj_{e_k} (X)$ denotes the orthogonal projection of the point $X$ onto the  face  $e_k$.

We claim that $\bv_b$ remains  a polynomial defined on the element $T$ after the extension.  

Let the  face  $e_k$ be defined by two linearly independent vectors $\beta_1,  \beta_{2}$ originating from a point $A$ on the  face  $e_k$. Any point $P$ on the  face  $e_k$ can be parametrized as
$$
P(t_1,   t_{2})=A+t_1\beta_1+ t_{2}\beta_{2},
$$
where $t_1,   t_{2}$ are parameters.

Note that $\bv_b(P(t_1,  t_{2}))$ is a polynomial of degree $k$ defined on the  face  $e_k$. It can be expressed as:
$$
\bv_b(P(t_1, t_{2}))=\sum_{|\alpha|\leq k}c_{\alpha}\textbf{t}^{\alpha},
$$
where $\textbf{t}^{\alpha}=t_1^{\alpha_1}  t_{2}^{\alpha_{2}}$ and $\alpha=(\alpha_1,  \alpha_{2})$.

For any point $X=(x_1, \cdots, x_3)$ in the element $T$, the projection  $Proj_{e_k} (X)$ onto the  face  $e_k$ is the point on $e_k$ that minimizes the distance to $X$. Mathematically, this projection $Proj_{e_k} (X)$ is an affine transformation which can be expressed as 
$$
Proj_{e_k} (X)=A+\sum_{i=1}^{2} t_i(X)\beta_i,
$$
where $t_i(X)$ are the projection coefficients, and $A$ is the origin point on $e_k$. The coefficients $t_i(X)$ are determined  by solving the orthogonality condition:
$$
(X-Proj_{e_k} (X))\cdot \beta_j=0, \qquad \forall j=1, 2.
$$
This results in a system of linear equations in $t_1(X)$, $t_{2}(X)$, which  can be solved to yield:
$$
t_i(X)= \text{linear function of} \  X.
$$
Hence, the projection $Proj_{e_k} (X)$ is an affine linear function  of $X$.

We extend the polynomial $\bv_b$ from the  face  $e_k$ to the entire element $T$ by defining
$$
\bv_b(X)=\bv_b(Proj_{e_k} (X))=\sum_{|\alpha|\leq k}c_{\alpha}\textbf{t}(X)^{\alpha},
$$
where $\textbf{t}(X)^{\alpha}=t_1(X)^{\alpha_1}  t_{2}(X)^{\alpha_{2}}$. Since $t_i(X)$ are linear functions of $X$, each term $\textbf{t}(X)^{\alpha}$ is a polynomial in $X=(x_1, \cdots, x_3)$.
Thus, $\bv_b(X)$ is a polynomial in the three dimensional coordinates $X=(x_1, \cdots, x_3)$.

 Secondly, let $\bv_{trace}$ denote the trace of $\bv_0$ on the  face  $e_k$. We extend $\bv_{trace}$   to the entire element $T$  using  the following formula:
$$
 \bv_{trace} (X)= \bv_{trace}(Proj_{e_k} (X)),
$$
where $X$ is any point in the element $T$, $Proj_{e_k} (X)$ denotes the projection of the point $X$ onto the  face  $e_k$. Similar to the case for $\bv_b$, $\bv_{trace}$ remains a polynomial after this extension. 

Let $\bq=(\bv_b-\bv_0)\times\bn \varphi_{e_k}$. We have
\begin{equation*}
    \begin{split}
\|\bq\|^2_T  =
\int_T \bq^2dT =  &\int_T ((\bv_b-\bv_{trace})(X)\times\bn \varphi_{e_k})^2dT\\
\leq &Ch_T \int_{e_k} ((\bv_b-\bv_{trace})(Proj_{e_k} (X))\times\bn \varphi_{e_k})^2ds\\
\leq &Ch_T \int_{e_k} ((\bv_b-\bv_0)\times\bn)^2ds,
    \end{split}
\end{equation*} 
where we used the fact that there exists a subdomain $\widehat{e_k}\subset e_k$ such that $\varphi_{e_k}\geq \rho_1$ for some constant $\rho_1>0$, and applied the properties of the projection.

 This completes the proof of the lemma.

\end{proof}
\begin{lemma}\label{normeqva}   There exists two positive constants $C_1$ and $C_2$ such that for any $\bv=\{\bv_0, \bv_b\} \in V_h$, we have
 \begin{equation}\label{normeq}
 C_1\|\bv\|_{1, h}\leq \3bar \bv\3bar_{V_h}  \leq C_2\|\bv\|_{1, h}.
\end{equation}
\end{lemma}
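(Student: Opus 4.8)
The plan is to prove the two-sided inequality \eqref{normeq} elementwise, establishing on each $T\in\T_h$ the equivalence of $\|\nabla_w\times\bv\|_T^2$ with the local quantity $\|\nabla\times\bv_0\|_T^2 + h_T^{-1}\|(\bv_0-\bv_b)\times\bn\|_{\partial T}^2$, and then summing over all elements. I would handle the two directions separately; they draw on the two bubble-function lemmas already in hand, namely Lemma \ref{normm} (built from the volume bubble $\Phi_B$) and Lemma \ref{phi} (built from the face bubble $\varphi_{e_k}$).

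For the upper bound $\3bar\bv\3bar_{V_h}\le C_2\|\bv\|_{1,h}$, I would take $\bq=\nabla_w\times\bv\in[P_r(T)]^3$ as the test function in the integration-by-parts identity \eqref{discurlcurlnew}, giving
$$\|\nabla_w\times\bv\|_T^2 = (\nabla\times\bv_0,\nabla_w\times\bv)_T - \langle(\bv_b-\bv_0)\times\bn,\nabla_w\times\bv\rangle_{\partial T}.$$
The first term is controlled by Cauchy--Schwarz by $\|\nabla\times\bv_0\|_T\,\|\nabla_w\times\bv\|_T$. For the boundary term I apply Cauchy--Schwarz on $\partial T$ together with the polynomial trace inequality \eqref{trace}, $\|\nabla_w\times\bv\|_{\partial T}\le Ch_T^{-1/2}\|\nabla_w\times\bv\|_T$, to obtain $Ch_T^{-1/2}\|(\bv_b-\bv_0)\times\bn\|_{\partial T}\,\|\nabla_w\times\bv\|_T$. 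Dividing through by $\|\nabla_w\times\bv\|_T$, squaring, and summing over $T$ yields this direction, which is routine.

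The substantive direction is the lower bound $C_1\|\bv\|_{1,h}\le\3bar\bv\3bar_{V_h}$. The curl contribution $\|\nabla\times\bv_0\|_T\le C\|\nabla_w\times\bv\|_T$ is exactly Lemma \ref{normm}, so the work is to bound the jump $h_T^{-1}\|(\bv_0-\bv_b)\times\bn\|_{\partial T}^2$ by the weak-curl norm. For a fixed face $e_k$ I would insert the face-bubble test function $\bq=(\bv_b-\bv_0)\times\bn\,\varphi_{e_k}$ into \eqref{discurlcurlnew}; since $\varphi_{e_k}$ vanishes on every other face, the boundary integral collapses to a single face, so that
$$\int_{e_k}\big((\bv_b-\bv_0)\times\bn\big)^2\varphi_{e_k}\,ds = (\nabla\times\bv_0,\bq)_T-(\nabla_w\times\bv,\bq)_T.$$
A bubble-coercivity estimate, the face analogue of \eqref{t2} furnished by $\varphi_{e_k}\ge\rho_1$ on $\widehat{e_k}$, bounds the left side below by $C\int_{e_k}((\bv_b-\bv_0)\times\bn)^2\,ds$. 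On the right I use Cauchy--Schwarz, then Lemma \ref{normm} to absorb $\|\nabla\times\bv_0\|_T$ into $\|\nabla_w\times\bv\|_T$, and Lemma \ref{phi} to bound $\|\bq\|_T\le Ch_T^{1/2}(\int_{e_k}((\bv_b-\bv_0)\times\bn)^2\,ds)^{1/2}$. Cancelling one factor of $(\int_{e_k}(\cdots)^2\,ds)^{1/2}$ gives $h_T^{-1}\int_{e_k}((\bv_b-\bv_0)\times\bn)^2\,ds\le C\|\nabla_w\times\bv\|_T^2$; summing over the finitely many faces of $T$ and adding the curl term completes the elementwise lower bound, and summing over $\T_h$ finishes \eqref{normeq}.

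The main obstacle, and the only place convexity would ordinarily intrude, is the face-bubble coercivity step $\int_{e_k}(\cdots)^2\varphi_{e_k}\,ds\ge C\int_{e_k}(\cdots)^2\,ds$: it is what removes the bubble weight without losing a power of $h_T$, and it must hold uniformly for non-convex $T$. This is precisely why the face bubble is taken as the product of squared linear factors $\prod_{i\ne k}l_i^2$, guaranteeing $\varphi_{e_k}\ge 0$ with a uniform lower bound $\rho_1$ on a subregion $\widehat{e_k}$ regardless of convexity; the supporting polynomial-extension argument of Lemma \ref{phi}, which ensures $\bq$ is genuinely a polynomial on all of $T$, is the second ingredient that makes this test function admissible in \eqref{discurlcurlnew}.
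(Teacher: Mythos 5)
Your proposal is correct and follows essentially the same route as the paper's own proof: the upper bound via Cauchy--Schwarz and the trace inequality \eqref{trace} applied to \eqref{discurlcurlnew}, and the lower bound by combining Lemma \ref{normm} for the curl term with the face-bubble test function $\bq=(\bv_b-\bv_0)\times\bn\,\varphi_{e_k}$, the coercivity of $\varphi_{e_k}$ (the domain inverse inequality), and Lemma \ref{phi} to control $\|\bq\|_T$. The only difference is cosmetic: your identity for the collapsed boundary term is stated more cleanly than the paper's \eqref{t33}, which carries a spurious constant.
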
 

\begin{proof}    
Letting $\bq=(\bv_b-\bv_0)\times\bn \varphi_{e_k}$ in \eqref{discurlcurlnew} gives
  \begin{equation} \label{t33}
\begin{split}
&(\nabla_w \times  \bv, \bq)_T\\=&( \nabla\times  \bv_0,   \bq)_T-\langle (\bv_b-\bv_0)\times\bn,  \bq\rangle_{\partial T} \\
=&(\nabla\times \bv_0,   \bq)_T-C \int_{e_k} |(\bv_b-\bv_0)\times\bn|^2  \varphi_{e_k} ds.
\end{split} 
\end{equation}
Using Cauchy-Schwarz inequality, the domain inverse inequality \cite{wy3655}, Lemma \ref{phi} and \eqref{t33} gives
\begin{equation*}
\begin{split}
  \int_{e_k}|(\bv_b-\bv_0)\times\bn|^2 ds\leq & C \int_{e_k} |(\bv_b-\bv_0)\times\bn|^2  \varphi_{e_k}  ds
  \\ \leq& C (\|\nabla_w \times  \bv\|_T+\|\nabla\times \bv_0\|_T){ \|\bq\|_T}\\
 \leq & C { h_T^{\frac{1}{2}}} (\| \nabla_w \times  \bv\|_T+\| \nabla\times \bv_0\|_T){ (\int_{e_k} |(\bv_b-\bv_0)\times\bn|^2ds)^{\frac{1}{2}}},
 \end{split}
\end{equation*}
which, by using \eqref{normeq1}, gives 
\begin{equation}\label{t21}
 h_T^{-1}\int_{e_k} |(\bv_b-\bv_0)\times\bn|^2ds \leq C  (\| \nabla_w \times  \bv\|^2_T+\| \nabla\times  \bv_0\|^2_T)\leq C\| \nabla_w \times  \bv\|^2_T.   
\end{equation}
 
  Using     \eqref{3norm}- \eqref{normeq1}  and \eqref{t21}, we get
$$
 C_1\|\bv\|_{1, h}\leq \3bar \bv\3bar_{V_h}.
$$

Next, applying the Cauchy-Schwarz inequality and  the trace inequality \eqref{trace} to  \eqref{discurlcurlnew}, gives 
\begin{equation*} 
\begin{split}
& \Big| (\nabla_w \times  \bv, \bq)_T \Big| \\
\leq &\|\nabla\times \bv_0\|_T \|\bq\|_T+\|(\bv_b-\bv_0)\times\bn\|_{\partial T}\|  \bq\|_{\partial T} \\ 
\leq &\|\nabla\times  \bv_0\|_T \|\bq\|_T+Ch_T^{-\frac{1}{2}}\|(\bv_b-\bv_0)\times\bn\|_{\partial T}\|\bq\|_{T}. 
\end{split} 
\end{equation*}
This yields
$$
\|\nabla_w \times \bv\|_T^2\leq C( \| \nabla\times  \bv_0\|^2_T  +
 h_T^{-1}\|(\bv_b-\bv_0)\times\bn\|^2_{\partial T}),
$$
 and further gives $$ \3bar \bv\3bar_{V_h}  \leq C_2\|\bv\|_{1, h}.$$

 This completes the proof of the lemma. 
 \end{proof}

 \begin{remark}
   If the polyhedral element $T$  is convex, 
  the   face-based bubble function in Lemma \ref{phi} and Lemma \ref{normeqva}   can be  simplified to
$$\varphi_{e_k}= \Pi_{i=1, \cdots, N, i\neq k}l_i(x).$$
It can be verified that (1) $\varphi_{e_k}=0$ on the  face $e_i$ for $i \neq k$, (2) there exists a subdomain $\widehat{e_k}\subset e_k$ such that $\varphi_{e_k}\geq \rho_1$ for some constant $\rho_1>0$. Lemma \ref{phi} and Lemma \ref{normeqva}   can be proved in the same manner using this simplified construction.  
\end{remark}

\begin{theorem}
The weak Galerkin finite element scheme (\ref{32})-(\ref{2}) has a unique solution.
\end{theorem}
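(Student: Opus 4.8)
The plan is to exploit the fact that \eqref{32}--\eqref{2} is a square linear system posed on the finite-dimensional space $V_h^0\times W_h^0$ (after lifting the boundary datum $Q_b\bphi$ into a fixed function and moving it to the right-hand side), so that existence is equivalent to uniqueness. It therefore suffices to show that the homogeneous problem—with $\bf=0$, $g=0$ and $\bphi=0$—admits only the trivial solution. First I would take the difference of two solutions, which lies in $V_h^0\times W_h^0$ and satisfies \eqref{32}--\eqref{2} with vanishing right-hand sides. Choosing $\bv=\bu_h$ in \eqref{32} and $q=p_h$ in \eqref{2} and adding the two identities cancels the $b(\cdot,\cdot)$ terms and yields
\[
a(\bu_h,\bu_h)+s(p_h,p_h)=0.
\]
Since $\nu$ is bounded below by a positive constant, $a(\bu_h,\bu_h)=\sum_{T}(\nu\nabla_w\times\bu_h,\nabla_w\times\bu_h)_T\ge 0$ and $s(p_h,p_h)=\3bar p_h\3bar_{W_h}^2\ge 0$, so both terms vanish. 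Hence $\nabla_w\times\bu_h=0$ on every $T$, and $p_0=p_b$ on each $\partial T$.

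Next I would recover $p_h=0$. Because $\nabla_w\times\bu_h=0$, the form $a(\bu_h,\cdot)$ vanishes identically, so \eqref{32} collapses to $b(\bv,p_h)=0$ for all $\bv\in V_h^0$. Selecting the admissible test function $\bv=\{\nabla_w p_h,\,0\}$ (its interior value $\nabla_w p_h\in[P_k(T)]^3$ and $\bv_b=0$) gives $\sum_T\|\nabla_w p_h\|_T^2=0$, hence $\nabla_w p_h=0$ on each $T$. Invoking \eqref{disgradient*} together with the already-established identity $p_0=p_b$, the boundary term drops and I obtain $(\nabla p_0,\bpsi)_T=0$ for all $\bpsi\in[P_k(T)]^3$; the choice $\bpsi=\nabla p_0$ forces $p_0$ to be constant on each element. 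Matching these element constants through the single-valued $p_b$ across $\E_h^0$, using $p_b=0$ on $\partial\Omega$, and appealing to the connectedness of the mesh then propagates the value $0$ to every element, so $p_h=0$.

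It remains to show $\bu_h=0$, which is where the genuine work lies. Using Lemma~\ref{normeqva}, $\nabla_w\times\bu_h=0$ already gives $\|\bu_h\|_{1,h}=0$, i.e.\ $\nabla\times\bu_0=0$ on each $T$ and $\bu_0\times\bn=\bu_b\times\bn$ on each $\partial T$ (tangential continuity across $\E_h^0$, with vanishing tangential trace on $\partial\Omega$). With $p_h=0$, equation \eqref{2} reduces to $b(\bu_h,q)=0$ for all $q\in W_h^0$; writing $b(\bu_h,q)=\sum_T\big(-(q_0,\nabla\cdot\bu_0)_T+\langle q_b,\bu_0\cdot\bn\rangle_{\partial T}\big)$ and testing first with interior values $q_0$ (taking $q_b=0$) and then with interface values $q_b$ yields $\nabla\cdot\bu_0=0$ on each $T$ together with $\ljump\bu_0\cdot\bn\rjump=0$ across $\E_h^0$. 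Thus $\bu_0$ is continuous in both its tangential and normal components, hence globally continuous, so $\bu_0\in H_0(curl;\Omega)\cap H(div;\Omega)$ with $\nabla\times\bu_0=0$ and $\nabla\cdot\bu_0=0$.

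The main obstacle is precisely this final step: concluding from these properties that $\bu_0\equiv0$, since $\3bar\cdot\3bar_{V_h}$ controls only the curl and is merely a seminorm on curl-free fields. I would close the argument by the potential/Hodge route—assuming $\Omega$ simply connected, a curl-free field is a gradient $\nabla\phi$, divergence-freeness makes $\phi$ harmonic, and the vanishing tangential trace forces $\phi$ constant on $\partial\Omega$, so $\phi$ is constant and $\bu_0=0$. Finally, $\bu_b\times\bn=\bu_0\times\bn=0$ with $\bu_b$ purely tangential forces $\bu_b=0$, giving $\bu_h=0$. This establishes uniqueness, and hence, for the square system, existence.
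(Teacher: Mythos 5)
Your proof is correct and follows essentially the same route as the paper: the energy identity giving $\nabla_w\times\bu_h=0$ and $p_0=p_b$, the norm equivalence of Lemma \ref{normeqva}, special test functions in $b(\cdot,\cdot)$ to extract $\nabla\cdot\bu_0=0$ and normal continuity, and the potential/harmonic-function argument to conclude $\bu_0=0$. The only differences are cosmetic: you establish $p_h=0$ before $\bu_h=0$ (which is valid, since that step only needs $a(\bu_h,\cdot)\equiv 0$, not $\bu_h=0$ as the paper uses), and you make explicit the simple-connectedness assumption on $\Omega$ that the paper's potential argument leaves implicit.
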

\begin{proof}
It suffices to prove that if $\bf=0$,  $\bphi=0$, and $g=0$, then  $\bu_h=0$ and $p_h=0$ in $\Omega$. 
To this end, taking $\bv=\bu_h$ in (\ref{32}) and $q=p_h$ in (\ref{2}) gives
$$
\sum_{T\in {\cal T}_h} (\nabla_w\times\bu_h, \nabla_w\times \bu_h)_T+ h_T\langle p_0-p_b, p_0-p_b\rangle_{\partial T}=0.
$$
This implies 
$$\3bar\bu_h\3bar_{V_h}=0, \qquad p_0=p_b  \ \text{on each} \ \partial T,
$$ 
which, together with  \eqref{normeq}, yields 
$$
\|\bu_h\|_{1, h}=0, \qquad p_0=p_b \  \text{on each} \ \partial T.
$$ 
This gives
\begin{eqnarray} 
 \nabla\times \bu_0&=&0, \quad \text{on each}\ T,\label{tt1}
  \\
   \bu_0 \times \bn&=&\bu_b\times \bn, \  \text{on each}\ \partial T,\label{tt3}
      \\
     p_0&=&p_b, \quad \text{on each}\ \partial T.\label{tt5}
 \end{eqnarray}

Using \eqref{tt5} implies $s(p_h, q)=0$ in \eqref{2}. Thus,  \eqref{2} indicates 
\begin{equation}\label{ee1}
\begin{split}
0&=\sum_{T\in {\cal T}_h}  (\nabla_w q, \bu_0)_T\\
&=\sum_{T\in {\cal T}_h} -(q_0, \nabla\cdot\bu_0)_T+\langle q_b, \bu_0\cdot\bn\rangle_{\pT}\\
&=\sum_{T\in {\cal T}_h}  -(q_0, \nabla\cdot\bu_0)_T+\sum_{e\in {\cal E}_h^0}\langle q_b, \ljump \bu_0\cdot\bn\rjump \rangle_{e},
\end{split}
\end{equation}
where we used \eqref{disgradient} and the fact that  $q_b=0$ on $\partial\Omega$, and $\ljump \bu_0\cdot\bn\rjump$ denotes the jump of $\bu_0\cdot\bn$ on the interior  face $e\in {\cal E}_h^0$.
Letting $q_0=0$ and $q_b=\ljump \bu_0\cdot\bn \rjump$ in \eqref{ee1} yields that $\ljump \bu_0\cdot\bn \rjump=0$ on $e\in {\cal E}_h^0$, which means $\bu_0\cdot\bn$ is continuous along the interior interface $e\in {\cal E}_h^0$. This follows that $\bu_0\in H(div; \Omega)$. Taking $q_0=\nabla\cdot \bu_0$ and $q_b=0$  in \eqref{ee1} gives $\nabla \cdot \bu_0=0$ on each $T$. This, together with the fact that $\bu_0\in H(div; \Omega)$, yields $\nabla \cdot \bu_0=0$ in $\Omega$.

It follows from  \eqref{tt3} that $\bu_0\times\bn$  is continuous across the interior interface ${\cal E}_h^0$. Thus, $\bu_0\in H(curl; \Omega)$. This, together with \eqref{tt1}, gives $\nabla \times \bu_0=0$ in $\Omega$.  Thus, there exists a potential function $\psi$ such that $\bu_0=\nabla \psi$ in $\Omega$. It follows from the fact that  $\nabla \cdot\bu_0=0$ in $\Omega$ that $\nabla\cdot\bu_0=\Delta \psi=0$ strongly holds in $\Omega$ with the boundary condition $\nabla \psi \times \bn=\bu_0\times \bn=\bu_b\times \bn=0$ on $\partial \Omega$, where we used \eqref{tt3} and $\bu_b\times \bn=0$ on $\partial \Omega$. This implies that $\psi=C$ on $\partial\Omega$. The uniqueness of the solution of the Laplace equation implies that $\psi=C$  is the only solution of $\Delta \psi=0$.   Thus, $\bu_0=\nabla \psi\equiv 0$ in $\Omega$. Using  \eqref{tt3} gives $\bu_b \times \bn=\bu_0 \times \bn= 0$ in $\Omega$, which indicates $\bu_b=0$ and further $\bu_h\equiv 0$ in $\Omega$.

Since $\bu_h\equiv 0$ in $\Omega$,   \eqref{32} implies that $b(\bv, p_h)=0$ for any $\bv\in V_h^0$. It follows from \eqref{disgradient*} and \eqref{tt5} that 
\begin{equation*}
    \begin{split}
        0=&b(\bv, p_h)=(\bv_0, \nabla_w p_h)\\
        = &\sum_{T\in {\cal T}_h}  (\nabla  p_0, \bv_0)_T-\langle p_0-p_b, \bv_0\cdot\bn \rangle_{\partial T}\\
        =&\sum_{T\in {\cal T}_h}  (\nabla  p_0, \bv_0)_T.
    \end{split}
\end{equation*}
Letting $\bv=\{\bv_0, \bv_b\}=\{\nabla p_0, 0\}$ in the above equation gives $\nabla p_0=0$ on each $T\in {\cal T}_h$. Thus, $p_0=C$ on each $T$, which, together with $\eqref{tt5}$, gives $p_0=C$ in the domain $\Omega$. This, together with $p_b=0$ on $\partial\Omega$ and \eqref{tt5}, gives $p_0\equiv 0$ in $\Omega$. This, together with \eqref{tt5}, yields  $p_b\equiv 0$ in $\Omega$. Thus,   $p_h\equiv 0$ in $\Omega$.

This completes the proof of the theorem. 
\end{proof}

\section{Error Equations}\label{Section:error-equation}
In this section, we derive the error equations for the weak Galerkin method \eqref{32}-\eqref{2} applied to the Maxwell equations \eqref{model4}, based on the weak formulation \eqref{weakform}. These error equations are instrumental for the subsequent convergence analysis.

Let  $k\geq 1$. Let $\bQ_0$ be the $L^2$ projection operator onto $[P_k(T)]^3$. Analogously, for $e\subset\partial T$, denote by $\bQ_b$  the $L^2$ projection operator  onto $[P_{k}(e)]^3$. For $\bw\in H(curl; \Omega)$, define the $L^2$ projection $\bQ_h \bw\in V_h$ as follows
$$
\bQ_h\bw|_T=\{\bQ_0 \bw, \bQ_b \bw\}.
$$
For $\sigma \in H^1(\Omega)$, the $L^2$ projection $Q_h \sigma\in W_h$ is defined by 
 $$
 Q_h\sigma|_T=\{Q_0 \sigma, Q_b \sigma\},
$$
where $Q_0$ and $Q_b$ are the $L^2$ projection operators onto $P_{k-1}(T)$ and $P_k(e)$ respectively.  Denote by ${\cal Q}_h^{r}$  the $L^2$ projection operators onto $P_{r}(T)$.

\begin{lemma} 
    The following properties hold true:
\begin{equation}\label{l}
    \nabla_w\times \bu ={\cal Q}_h^{r}(\nabla\times \bu), \qquad \forall\bu\in H(curl; T),    
\end{equation}
 \begin{equation}\label{l-2}
    \nabla_w (Q_hq)=\bQ_0 (\nabla q), \qquad \forall q\in H^1(T),  
\end{equation}      
\begin{equation}\label{l-3}
    \nabla_w \cdot \bu={\cal Q}_h^{r}(\nabla \cdot\bu), \qquad \forall\bu\in H(div; T).   
\end{equation}  
\end{lemma}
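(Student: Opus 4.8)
The plan is to prove each of the three identities \eqref{l}, \eqref{l-2}, \eqref{l-3} separately by the same mechanism: each discrete weak operator is \emph{defined} as the unique polynomial of degree $r$ (resp.\ $k$) satisfying a variational identity against all test polynomials, so to identify it with an $L^2$ projection it suffices to verify that the claimed projection satisfies the very same defining identity. The uniqueness built into the definitions \eqref{discurlcurl}, \eqref{disgradient}, \eqref{disdiv} then forces equality. This reduces all three statements to checking a single integration-by-parts computation in each case.

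For \eqref{l}, take $\bu \in H(curl;T)$ and view it as the weak function $\{\bu,\bu\}$ (interior and boundary data both equal to the genuine trace). The identity \eqref{discurlcurlnew} then applies with $\bv_b - \bv_0 = 0$ on $\partial T$, so the boundary term drops and $(\nabla_{w,r,T}\times\bu,\bq)_T = (\nabla\times\bu,\bq)_T$ for all $\bq \in [P_r(T)]^3$. The right-hand side is precisely the defining relation of the $L^2$ projection ${\cal Q}_h^r(\nabla\times\bu)$ tested against $[P_r(T)]^3$, so by uniqueness $\nabla_w\times\bu = {\cal Q}_h^r(\nabla\times\bu)$. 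The argument for \eqref{l-3} is identical, using \eqref{disdivnew} in place of \eqref{discurlcurlnew}: the boundary term vanishes because $\bv_0 - \bv_b = 0$, leaving $(\nabla_{w,r,T}\cdot\bu,q)_T = (\nabla\cdot\bu,q)_T$ for all $q\in P_r(T)$, which identifies the discrete weak divergence with ${\cal Q}_h^r(\nabla\cdot\bu)$.

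The gradient identity \eqref{l-2} is slightly different because the projection degrees must be tracked carefully. Here I would start from the definition \eqref{disgradient} applied to $Q_h q = \{Q_0 q, Q_b q\}$, giving $(\nabla_w(Q_h q),\bpsi)_T = -(Q_0 q,\nabla\cdot\bpsi)_T + \langle Q_b q,\bpsi\cdot\bn\rangle_{\partial T}$ for all $\bpsi\in[P_k(T)]^3$. Since $\nabla\cdot\bpsi \in P_{k-1}(T)$ and $Q_0$ is the $L^2$ projection onto $P_{k-1}(T)$, I can replace $Q_0 q$ by $q$ in the volume term; similarly $\bpsi\cdot\bn|_e \in P_k(e)$ and $Q_b$ projects onto $P_k(e)$, so $Q_b q$ may be replaced by $q$ in the boundary term. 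Undoing the integration by parts then yields $(\nabla_w(Q_h q),\bpsi)_T = (\nabla q,\bpsi)_T$ for all $\bpsi\in[P_k(T)]^3$, which is exactly the defining relation of $\bQ_0(\nabla q)$.

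The main obstacle, such as it is, lies in the last paragraph: one must confirm that the polynomial degrees match so that the commuting property of the $L^2$ projections $Q_0$ and $Q_b$ can be invoked — specifically that $\nabla\cdot\bpsi$ lands in $P_{k-1}$ and the face trace $\bpsi\cdot\bn$ lands in $P_k(e)$, matching the respective projection targets in the definition of $Q_h$ in \eqref{Wk}. For \eqref{l} and \eqref{l-3} there is genuinely no obstacle; the only subtlety is recognizing that embedding a classical field as a weak function with matching trace data annihilates the boundary term, after which the result is immediate from uniqueness of the discrete operator.
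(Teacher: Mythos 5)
Your proposal is correct and follows essentially the same route as the paper's own proof: for \eqref{l} and \eqref{l-3} the paper also embeds $\bu$ as a weak function with matching trace so that the boundary term in \eqref{discurlcurlnew} (resp.\ \eqref{disdivnew}) vanishes, and for \eqref{l-2} it performs the same replacement of $Q_0 q$ by $q$ and $Q_b q$ by $q$ using the degree matching $\nabla\cdot\bpsi\in P_{k-1}(T)$, $\bpsi\cdot\bn|_e\in P_k(e)$, followed by integration by parts. Your explicit appeal to uniqueness of the defining variational identity is exactly what the paper's chains of equalities use implicitly, so there is nothing to add.
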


\begin{proof}
For any $\bu\in H(curl; T)$, it follows from \eqref{discurlcurlnew} that 
\begin{equation*}  
\begin{split}
&( \nabla_{w} \times   \bu, \bq)_T\\=&(\nabla\times \bu,   \bq)_T-\langle (\bw|_{\partial T}-\bw|_T)\times\bn,  \bq\rangle_{\partial T} 
\\=&(\nabla\times \bu,   \bq)_T\\
=&({\cal Q}_h^{r}(\nabla\times \bu), \bq)_T,
\end{split} 
\end{equation*}
for any $\bq\in [P_{r}(T)]^3$. 
This completes the proof of \eqref{l}. 

For any $q\in H^1(T)$, using \eqref{disgradient} gives
\begin{equation*} 
 \begin{split}
     (\nabla_{w} (Q_hq), \boldsymbol{\psi})_T=& -(Q_0q, \nabla \cdot\boldsymbol{\psi})_T+\langle  Q_bq, \boldsymbol{\psi} \cdot \textbf{n}\rangle_{\partial T}\\
    =& -(q, \nabla \cdot\boldsymbol{\psi})_T+\langle  q, \boldsymbol{\psi} \cdot \textbf{n}\rangle_{\partial T}\\
    =& (\nabla q, \boldsymbol{\psi})_T 
 \\=& (\bQ_0(\nabla q), \boldsymbol{\psi})_T,
    \end{split} 
 \end{equation*}
 for any $\boldsymbol{\psi}\in [P_k (T)]^3$. This completes the proof of \eqref{l-2}.

 For any $\bu\in H(div; T)$, it follows from \eqref{disdivnew} that 
\begin{equation*}  
\begin{split}
&( \nabla_{w} \cdot \bu, q)_T\\=&(\nabla \cdot\bu,   q)_T+\langle (\bw|_{\partial T}-\bw|_T) \cdot\bn,  q\rangle_{\partial T} 
\\=&(\nabla\cdot \bu,   q)_T\\
=&({\cal Q}_h^{r}(\nabla\cdot \bu), q)_T,
\end{split} 
\end{equation*}
for any $q\in P_{r}(T)$. 
This completes the proof of \eqref{l-3}. 

\end{proof}

Let $(\bu_h, p_h)$ be the WG solutions of \eqref{32}-\eqref{2}. Define the error functions $\be_h$ and $\epsilon_h$  as follows: 
\begin{eqnarray}\label{error}
\be_h&=&\bu-
\bu_h,\\
\epsilon_h&=&Q_hp-p_h. \label{error-2}
\end{eqnarray}

 \begin{lemma}\label{errorequa}
Let $(\bu; p)$  be the exact solutions of the Maxwell equations \eqref{model4}, and let  $(\bu_h; p_h) \in V_h\times W_h^0$  be the numerical solutions  obtained from WG scheme \eqref{32}-\eqref{2}.    The error functions $\be_h$ and $\epsilon_h$ defined in \eqref{error}-\eqref{error-2} satisfy the following error equations: 
\begin{eqnarray}\label{sehv}
a(\be_h, \bv)-b(\bv, \epsilon_h)&=&  \ell_1(\bu, \bv),\quad   \forall \bv\in V_{h}^0,\\
s(\epsilon_h, q)+b(\be_h, q)&=& s(Q_hp, q) +\ell_2(\bu, q),\quad\forall q\in W^0_h. \label{sehv2}
\end{eqnarray}
\end{lemma}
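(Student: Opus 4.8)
The plan is to derive both error equations by subtracting the weak Galerkin scheme \eqref{32}--\eqref{2} from the corresponding identities satisfied by the exact solution, where throughout I read the exact field $\bu$ in \eqref{error} as the weak function $\{\bu|_T,\bu|_{\partial T}\}$ on each element, so that the commutative formulas \eqref{l} and \eqref{l-2} apply directly. I will assume $\nu$ is (piecewise) constant on $\T_h$ so that it commutes with the element projections; a variable coefficient only adds lower-order projection terms. I anticipate that the two consistency functionals take the form
\[
\ell_1(\bu,\bv)=\sum_{T\in\T_h}\nu\langle {\cal Q}_h^{r}(\nabla\times\bu)-\nabla\times\bu,\ (\bv_0-\bv_b)\times\bn\rangle_{\partial T},
\]
\[
\ell_2(\bu,q)=\sum_{T\in\T_h}\langle q_0-q_b,\ (\bu-\bQ_0\bu)\cdot\bn\rangle_{\partial T},
\]
and the proof consists in producing exactly these from the algebra.

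For the first equation I would expand $a(\be_h,\bv)-b(\bv,\epsilon_h)=[a(\bu,\bv)-b(\bv,Q_hp)]-[a(\bu_h,\bv)-b(\bv,p_h)]$ and use \eqref{32} to replace the discrete bracket by $(\bf,\bv_0)$, reducing the task to showing $a(\bu,\bv)-b(\bv,Q_hp)-(\bf,\bv_0)=\ell_1(\bu,\bv)$. Here $a(\bu,\bv)$ is evaluated by \eqref{l}, giving $\nabla_w\times\bu={\cal Q}_h^{r}(\nabla\times\bu)$, followed by \eqref{discurlcurlnew} applied to $\nabla_w\times\bv$; while $b(\bv,Q_hp)$ is evaluated by \eqref{l-2}, giving $\nabla_w(Q_hp)=\bQ_0(\nabla p)$ and hence $b(\bv,Q_hp)=\sum_T(\bv_0,\nabla p)_T$. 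In parallel I test the strong equation $\nabla\times(\nu\nabla\times\bu)-\nabla p=\bf$ from \eqref{model4} against $\bv_0$, integrate by parts element-wise via $(\nabla\times\bw,\bv_0)_T=(\bw,\nabla\times\bv_0)_T+\langle\bn\times\bw,\bv_0\rangle_{\partial T}$, and subtract. The volume terms $\sum_T(\nabla p,\bv_0)_T$ cancel against $b(\bv,Q_hp)$, leaving the two face families $-\sum_T\nu\langle\nabla\times\bu,\bv_0\times\bn\rangle_{\partial T}$ and $\sum_T\nu\langle{\cal Q}_h^{r}(\nabla\times\bu),(\bv_0-\bv_b)\times\bn\rangle_{\partial T}$.

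The passage to $\ell_1$ then uses that the exact solution has $\nu\nabla\times\bu\in H(curl;\Omega)$ (since $\nabla\times(\nu\nabla\times\bu)=\bf+\nabla p\in[L^2(\Omega)]^3$), so its tangential trace is single-valued across interior faces; paired against the single-valued $\bv_b\times\bn$, which vanishes on $\partial\Omega$ for $\bv\in V_h^0$, the sum $\sum_T\nu\langle\nabla\times\bu,\bv_b\times\bn\rangle_{\partial T}$ vanishes, and collecting the survivors yields $\ell_1$. The second equation is treated in the same spirit: writing $s(\epsilon_h,q)+b(\be_h,q)=s(Q_hp,q)-[s(p_h,q)+b(\bu_h,q)]+b(\bu,q)$ and using \eqref{2} to replace the bracket by $-(g,q_0)$ leaves $\ell_2(\bu,q)=b(\bu,q)+(g,q_0)$; evaluating $b(\bu,q)$ through \eqref{disgradient}--\eqref{disgradient*}, integrating $(g,q_0)=(\nabla\cdot\bu,q_0)$ by parts element-wise, and cancelling the gradient volume terms collapses $\ell_2$ to face integrals, which reduce to the stated form once the continuity of $\bu\cdot\bn$ (from $\bu\in H(div;\Omega)$) and the single-valuedness of $q_b$ (with $q_b=0$ on $\partial\Omega$) kill the $\langle q_b,\bu\cdot\bn\rangle$ contributions.

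The main obstacle is the face-integral bookkeeping rather than any deep estimate: one must apply the curl integration-by-parts identity with the correct orientation of the $\times\bn$ tangential pairings, keep signs consistent between the two forms \eqref{discurlcurl} and \eqref{discurlcurlnew}, and then exploit the precise regularity and continuity of the exact solution together with the single-valuedness of $\bv_b,q_b$ and the boundary conditions built into $V_h^0,W_h^0$ to collapse the double-valued interface sums into single projection-error functionals. A secondary point to monitor is the degree mismatch from the enriched weak curl ($r=2N+k-1$): because ${\cal Q}_h^{r}(\nabla\times\bu)\neq\nabla_w\times\bu$ only up to the projection defect, the term $({\cal Q}_h^{r}-I)(\nabla\times\bu)$ does not disappear and is precisely what $\ell_1$ retains, so its subsequent bound in the convergence analysis must rest on the approximation properties of ${\cal Q}_h^{r}$ and $\bQ_0$.
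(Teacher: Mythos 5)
Your proposal is correct and follows essentially the same route as the paper: both use the commutativity identities \eqref{l} and \eqref{l-2}, test the strong equations of \eqref{model4} with $\bv_0$ and $q_0$, integrate by parts element-by-element via \eqref{discurlcurlnew} and \eqref{disgradient*}, and collapse the face sums using the single-valuedness of $\bv_b,q_b$ (with the boundary conditions in $V_h^0,W_h^0$) before subtracting the scheme \eqref{32}--\eqref{2}; your $\ell_1,\ell_2$ agree with the paper's up to a sign rearrangement of the pairings. Your explicit justification that $\sum_{T}\langle\nu\nabla\times\bu,\bv_b\times\bn\rangle_{\partial T}=0$ via $\nu\nabla\times\bu\in H(curl;\Omega)$, and your flagging of the piecewise-constant $\nu$ needed so that $\nu{\cal Q}_h^{r}(\nabla\times\bu)$ is an admissible polynomial test function in \eqref{discurlcurlnew}, are details the paper leaves implicit.
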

Here
 \begin{eqnarray*}\label{lu}
\ell_1(\bu, \bv)&=&\sum_{T\in {\cal T} _h}\langle \nu (\bv_b-\bv_0)\times \bn, (I-{\cal Q}_h^{r})\nabla\times \bu\rangle_{\partial T},\\
  \ell_2(\bu, q)&=&  \sum_{T\in{\cal T}_h} \langle q_0-q_b, (I-\bQ_0)\bu \cdot\bn\rangle_{\pT}.
\end{eqnarray*}
 
\begin{proof} Letting $\bq={\cal Q}_h^{r} (\nabla\times\bu)$ in \eqref{discurlcurlnew} and  using \eqref{l}, we have 
\begin{equation}\label{eq1}
\begin{split}
&\sum_{T\in {\cal T}_h}(\nu\nabla_w\times \bu,  \nabla_w\times  \bv)_T\\=& \sum_{T\in {\cal T}_h}(\nu{\cal Q}_h^{r} (\nabla\times \bu), \nabla_w\times \bv)_T\\
=& \sum_{T\in {\cal T}_h}  (\nu  \nabla\times \bv_0,   {\cal Q}_h^{r} (\nabla\times \bu))_T-\langle \nu(\bv_b-\bv_0)\times\bn,  {\cal Q}_h^{r} (\nabla\times \bu)\rangle_{\partial T}\\
=& \sum_{T\in {\cal T}_h}  ( \nu \nabla\times \bv_0,   \nabla\times \bu)_T-\langle \nu(\bv_b-\bv_0)\times\bn,  {\cal Q}_h^{r} (\nabla\times \bu)\rangle_{\partial T}.
\end{split}
\end{equation}

It follows from \eqref{l-2} that
\begin{equation}\label{eqq3}
\begin{split}
b(\bv, Q_hp)=\sum_{T\in {\cal T}_h}(\nabla_w(Q_hp), \bv_0)_T=\sum_{T\in {\cal T}_h}(\bQ_0(\nabla p), \bv_0)_T=\sum_{T\in {\cal T}_h}(\nabla p, \bv_0)_T.
\end{split}
\end{equation}
Testing the first equation in \eqref{model4}  with  $\bv_0$  where  $\bv=\{\bv_0, \bv_b\}\in V_h^0$ gives
\begin{equation}\label{eqq}
    (\nabla\times (\nu \nabla\times\bu), \bv_0)-(\nabla p, \bv_0)=(\bf, \bv_0).
\end{equation}
Using the usual integration by parts gives
\begin{equation}\label{eqq1}
    \begin{split}
\sum_{T\in {\cal T} _h}(\nabla\times(\nu \nabla\times \bu), \bv_0)_T=&\sum_{T\in {\cal T} _h}(\nu \nabla\times \bu, \nabla\times \bv_0)_T\\&-\langle \nu (\bv_b-\bv_0)\times \bn, \nabla\times \bu\rangle_{\partial T},        
    \end{split}
\end{equation}
where we used the fact that $\sum_{T\in {\cal T} _h}\langle \nu  \bv_b \times \bn, \nabla\times \bu\rangle_{\partial T}=\langle \nu  \bv_b \times \bn, \nabla\times \bu\rangle_{\partial \Omega}=0$ due to $\bv_b \times \bn=0$ on $\partial\Omega$. 

From \eqref{eq1} and \eqref{eqq1}, we have
\begin{equation}\label{eqq2}
    \begin{split}
\sum_{T\in {\cal T} _h}(\nabla\times(\nu \nabla\times \bu), \bv_0)_T=&
  \sum_{T\in {\cal T} _h}(\nu\nabla_w\times \bu,  \nabla_w\times  \bv)_T\\&+  \langle \nu (\bv_b-\bv_0)\times \bn, ({\cal Q}_h^{r}-I)\nabla\times \bu\rangle_{\partial T}.  \end{split}
\end{equation}
Substituting \eqref{eqq2} and \eqref{eqq3} into \eqref{eqq} yields
\begin{equation}\label{eqq4}
     \begin{split}
    & \sum_{T\in {\cal T} _h}(\nu\nabla_w\times \bu,  \nabla_w\times  \bv)_T -(\nabla_w (Q_hp),\bv_0)_T \\=&\sum_{T\in {\cal T} _h}(\bf, \bv_0)_T-  \langle \nu (\bv_b-\bv_0)\times \bn, ({\cal Q}_h^{r}-I)\nabla\times \bu\rangle_{\partial T}.
\end{split}
\end{equation}
Subtracting  \eqref{32} from \eqref{eqq4} gives
\begin{equation*}
    \begin{split}
 &\sum_{T\in {\cal T} _h}(\nu\nabla_w\times \be_h,  \nabla_w\times  \bv)_T -(\nabla_w \epsilon_h,\bv_0)_T \\=& -  \sum_{T\in {\cal T} _h}\langle \nu (\bv_b-\bv_0)\times \bn, ({\cal Q}_h^{r}-I)\nabla\times \bu\rangle_{\partial T}.       
    \end{split}
\end{equation*}   
 
  To derive \eqref{sehv2},  we test the second equation in \eqref{model4} with  $q_0$ where $q=\{q_0, q_b\}\in W_h^0$ and use the usual integration by parts to obtain
\begin{equation}\label{eq5}
 \sum_{T\in {\cal T}_h}- (\bu, \nabla q_0)+  \langle \bu\cdot\bn, q_0-q_b\rangle_{\pT}=\sum_{T\in {\cal T}_h}(g, q_0)_T,
\end{equation}
where we used $\sum_{T\in {\cal T}_h} \langle \bu\cdot\bn,  q_b\rangle_{\pT}=\langle \bu\cdot\bn,  q_b\rangle_{\partial\Omega}=0$ due to $q_b=0$ on $\partial\Omega$.

Using \eqref{disgradient*} and the usual integration by parts gives
\begin{equation*} \label{beq}
\begin{split}
s(Q_hp,q)+b(\bu, q) =&s(Q_hp,q)+\sum_{T\in {\cal T}_h}   (\bu, \nabla_w q)_T\\
=&s(Q_hp,q)+\sum_{T\in {\cal T}_h}   (\bQ_0\bu, \nabla_w q)_T\\
=& s(Q_hp,q)+\sum_{T\in {\cal T}_h} (\nabla q_0, \bQ_0\bu)_T-\langle q_0-q_b, \bQ_0\bu\cdot\bn\rangle_{\pT}\\
=& s(Q_hp,q)+\sum_{T\in {\cal T}_h} (\nabla q_0,  \bu)_T-\langle q_0-q_b, \bQ_0\bu\cdot\bn\rangle_{\pT}\\=& s(Q_hp,q)+ \sum_{T\in {\cal T}_h}\langle q_0-q_b, (I-\bQ_0)\bu\cdot\bn\rangle_{\pT}-(g, q_0)_T,
\end{split}
\end{equation*}
where we used \eqref{eq5} on the last line. 

Subtracting \eqref{2} from the above equation completes the proof of 
\eqref{sehv2}.

This completes the proof of the lemma. 
\end{proof}

\section{Error Estimates}\label{Section:error-estimates}
This section is dedicated to deriving the error estimates in a discrete 
$H^1$ norm for the weak Galerkin   scheme \eqref{32}-\eqref{2} when applied to the Maxwell equations \eqref{model4}. 
 
\begin{lemma}\cite{yemaxwell, wangmaxwell}
 Let $0\leq r\leq k$ and $0\leq s\leq 1$.  Assume that $\bv\in [H^{r+1}(\Omega)]^3$ and $q\in H^r(\Omega)$.  There holds
 \begin{equation}\label{est1}
     \sum_{T\in {\cal T}_h} h_T^{2s}\|\bv-\bQ_0\bv\|_{s, T}^2\leq Ch^{2(r+1)}\|\bv\|^2_{r+1},
 \end{equation}
 \begin{equation}\label{est2}
     \sum_{T\in {\cal T}_h} h_T^{2s}\|\nabla\times \bv-{\cal Q}_h^r(\nabla\times \bv)\|_{s, T}^2\leq Ch^{2r}\|\bv\|^2_{r+1}, 
 \end{equation}
 \begin{equation}\label{est4}
     \sum_{T\in {\cal T}_h} h_T^{2s}\|q-Q_0q\|_{s, T}^2\leq Ch^{2r}\|q\|^2_{r}. 
 \end{equation}

\end{lemma}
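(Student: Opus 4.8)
The plan is to reduce each of the three global estimates to a purely local approximation bound on a single element $T$, obtained from the Bramble--Hilbert lemma together with a standard scaling argument on shape-regular elements, and then to sum over $T\in\T_h$ using $h_T\le h$. The recurring local ingredient is the following: if $\Pi_T$ denotes the $L^2$ projection onto a polynomial space that contains $P_m(T)$, then for $0\le s\le m+1$ one has $\|\phi-\Pi_T\phi\|_{s,T}\le C h_T^{m+1-s}|\phi|_{m+1,T}$ for every $\phi\in H^{m+1}(T)$, with $C$ depending only on the shape-regularity. All three displayed inequalities are instances of this bound once the correct polynomial degree $m$ and the correct regularity index are identified, so the work is essentially bookkeeping on the indices.

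For \eqref{est1}, I would apply the local bound to $\phi=\bv$ with $\Pi_T=\bQ_0$, whose range $[P_k(T)]^3$ contains $[P_r(T)]^3$ because $r\le k$; taking $m=r$ gives $\|\bv-\bQ_0\bv\|_{s,T}\le Ch_T^{r+1-s}|\bv|_{r+1,T}$, so that $h_T^{2s}\|\bv-\bQ_0\bv\|_{s,T}^2\le Ch_T^{2(r+1)}|\bv|_{r+1,T}^2\le Ch^{2(r+1)}\|\bv\|_{r+1,T}^2$, and summing over $T$ yields the claim. For \eqref{est2}, I would set $\phi=\nabla\times\bv$, which lies in $[H^r(\Omega)]^3$ with $\|\nabla\times\bv\|_{r,T}\le C\|\bv\|_{r+1,T}$, and take $\Pi_T={\cal Q}_h^r$ with $m=r-1$ (its range $P_r(T)$ contains $P_{r-1}(T)$); this produces $h_T^{2s}\|\nabla\times\bv-{\cal Q}_h^r(\nabla\times\bv)\|_{s,T}^2\le Ch_T^{2r}|\nabla\times\bv|_{r,T}^2\le Ch^{2r}\|\bv\|_{r+1,T}^2$, and summation gives \eqref{est2}. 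For \eqref{est4}, I would take $\phi=q$, $\Pi_T=Q_0$ with range $P_{k-1}(T)\supset P_{r-1}(T)$ since $r\le k$, and $m=r-1$, so that $h_T^{2s}\|q-Q_0q\|_{s,T}^2\le Ch_T^{2r}|q|_{r,T}^2\le Ch^{2r}\|q\|_{r,T}^2$, and again summing over $T$ completes the estimate.

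The essentially routine nature of the argument means the only point requiring care is the index matching between the reproduction degree of the projection and the regularity that is actually available: in \eqref{est2} and \eqref{est4} the approximated function carries only $H^r$ (not $H^{r+1}$) regularity, so one must reproduce $P_{r-1}$ rather than $P_r$ and cannot claim an extra power of $h$, whereas in \eqref{est1} the full $H^{r+1}$ regularity of $\bv$ is used to obtain the sharp exponent $r+1$. One must also check that $s$ lies in the admissible range $0\le s\le m+1$ for each case; this is $s\le r+1$ for \eqref{est1}, which always holds, and $s\le r$ for \eqref{est2} and \eqref{est4}, which is compatible with $s\le 1$ whenever $r\ge 1$, the boundary case $r=0$ reducing to the $L^2$-stability of the projection at $s=0$. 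Since the result is quoted from \cite{yemaxwell, wangmaxwell}, I would alternatively simply invoke those references, but the scaling proof above is self-contained.
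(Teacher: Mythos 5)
Your proof is correct, and the main thing to know is that the paper itself offers no argument for this lemma: it is quoted verbatim from \cite{yemaxwell, wangmaxwell}, so your self-contained scaling proof is a genuine addition rather than a parallel to anything in the text. Your route — reduce each global bound to the local estimate $\|\phi-\Pi_T\phi\|_{s,T}\le C h_T^{m+1-s}|\phi|_{m+1,T}$ and sum over $T\in\T_h$ — is the standard one, and your index bookkeeping is exactly right: reproduction degree $m=r$ for \eqref{est1} (exponent $r+1$, using the full $H^{r+1}$ regularity of $\bv$ even though $\bQ_0$ projects onto degree $k\ge r$), but only $m=r-1$ for \eqref{est2} and \eqref{est4}, since $\nabla\times\bv$ and $q$ carry only $H^{r}$ regularity (exponent $r$). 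You also correctly flag the degenerate case $r=0$, $s=1$, where the $H^1$ seminorm on the left of \eqref{est2} and \eqref{est4} is not even defined under the stated regularity; reading that case as $s=0$ ($L^2$-stability) is the sensible interpretation and matches how the lemma is actually invoked later in the paper. The one step you assert rather than prove is the local ingredient itself when $s=1$: the $L^2$ projection is not a priori $H^1$-stable, so one needs the usual triangle-inequality detour through a Bramble--Hilbert quasi-interpolant combined with the $L^2$-optimality of $\Pi_T$ and the inverse inequality for polynomials on shape-regular elements; moreover, since the elements here are general (possibly non-convex) polyhedra rather than simplices, both the Bramble--Hilbert scaling and the inverse inequality should be taken from the shape-regularity framework of \cite{wy3655} for the constants to depend only on mesh regularity. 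This is standard and not a gap, but it is the one place where your otherwise complete bookkeeping leans on an unproved (though well-known) fact.
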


\begin{lemma}\label{lem2}  Let  $k\geq 1$.
Suppose $\bu\in [H^{k+1}(\Omega)]^3$ and $p\in H^{k}(\Omega)$. Then, for $(\bv, q)\in V_h\times W_h$, the following estimates hold true; i.e.,
\begin{eqnarray}\label{error2}
|\ell_1(\bu, \bv)|&\leq&    Ch^k \| \bu\|_{k+1}\|\bv\|_{1,h},\\
 |\ell_2(\bu, q)| &\leq&  Ch^{k}\|\bu\|_{k+1}\3bar q\3bar_{W_h},\label{error3} \\
 |s(Q_hp, q)| &\leq&  Ch^{k}\|p\|_k \3bar q\3bar_{W_h}. \label{error4} 
\end{eqnarray}
\end{lemma}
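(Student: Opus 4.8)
The plan is to establish all three bounds by one common recipe: split each boundary integral with the Cauchy--Schwarz inequality after inserting a balancing power of $h_T$, so that one factor reproduces the target norm on $\bv$ or $q$, and then dispose of the remaining projection-error factor by the trace inequality \eqref{trace-inequality} together with the approximation estimates \eqref{est1}, \eqref{est2}, \eqref{est4} read at the saturated order $r=k$. Throughout I treat $\nu$ as a bounded coefficient, absorbing it into the generic constant $C$.

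For \eqref{error2} I would first write
\[
|\ell_1(\bu,\bv)|\le C\sum_{T\in{\cal T}_h}h_T^{-\frac12}\|(\bv_b-\bv_0)\times\bn\|_{\partial T}\cdot h_T^{\frac12}\|(I-{\cal Q}_h^{r})\nabla\times\bu\|_{\partial T},
\]
and then apply Cauchy--Schwarz over the elements. The first resulting factor, $\big(\sum_T h_T^{-1}\|(\bv_b-\bv_0)\times\bn\|^2_{\partial T}\big)^{1/2}$, is bounded by $\|\bv\|_{1,h}$ by definition \eqref{disnorm}, while the second is $\big(\sum_T h_T\|(I-{\cal Q}_h^{r})\nabla\times\bu\|^2_{\partial T}\big)^{1/2}$. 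Bounding $h_T\|(I-{\cal Q}_h^{r})\nabla\times\bu\|^2_{\partial T}$ by $C\big(\|(I-{\cal Q}_h^{r})\nabla\times\bu\|^2_T+h_T^2\|(I-{\cal Q}_h^{r})\nabla\times\bu\|^2_{1,T}\big)$ through \eqref{trace-inequality} and invoking \eqref{est2} at $s=0$ and $s=1$ with index $k$ (noting that the weak-curl projection is onto $P_{r}(T)\supseteq P_k(T)$, so the degree-$k$ approximation rate still applies) yields $Ch^{2k}\|\bu\|^2_{k+1}$; the square root gives \eqref{error2}.

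Estimate \eqref{error3} is proved identically, now inserting $h_T^{1/2}$ on $\|q_0-q_b\|_{\partial T}$ and $h_T^{-1/2}$ on $\|(I-\bQ_0)\bu\cdot\bn\|_{\partial T}$, and using $|(I-\bQ_0)\bu\cdot\bn|\le|(I-\bQ_0)\bu|$. Cauchy--Schwarz then produces $\3bar q\3bar_{W_h}$ (definition \eqref{disnorma}) times $\big(\sum_T h_T^{-1}\|(I-\bQ_0)\bu\|^2_{\partial T}\big)^{1/2}$. The trace inequality turns $h_T^{-1}\|(I-\bQ_0)\bu\|^2_{\partial T}$ into $C\big(h_T^{-2}\|(I-\bQ_0)\bu\|^2_T+\|(I-\bQ_0)\bu\|^2_{1,T}\big)$; here the extra order carried by \eqref{est1} (which scales like $h^{2(k+1)}$) absorbs the two negative powers of $h_T$, so after summation both contributions are $O(h^{2k}\|\bu\|^2_{k+1})$, giving \eqref{error3}.

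For \eqref{error4}, Cauchy--Schwarz applied to $s(Q_hp,q)=\sum_T h_T\langle Q_0p-Q_bp,q_0-q_b\rangle_{\partial T}$ again isolates $\3bar q\3bar_{W_h}$, and it remains to bound $\sum_T h_T\|Q_0p-Q_bp\|^2_{\partial T}$. The single nonroutine point of the whole lemma is controlling $\|Q_0p-Q_bp\|_{\partial T}$: I would write $Q_0p-Q_bp=(Q_0p-p)+(p-Q_bp)$ and observe that on each face $e\subset\partial T$ the trace of $Q_0p$ lies in $P_{k-1}(e)\subset P_k(e)$, so $L^2(e)$-optimality of the face projection $Q_b$ gives $\|p-Q_bp\|_e\le\|p-Q_0p\|_e$, whence $\|Q_0p-Q_bp\|_{\partial T}\le 2\|p-Q_0p\|_{\partial T}$. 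After this reduction, \eqref{trace-inequality} and \eqref{est4} at $s=0,1$ with $r=k$ give $\sum_T h_T\|Q_0p-Q_bp\|^2_{\partial T}\le Ch^{2k}\|p\|^2_k$, completing \eqref{error4}. I expect this projection-comparison step to be the only genuinely delicate part; everything else is bookkeeping of $h_T$-powers inside the trace and approximation inequalities.
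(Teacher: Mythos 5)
Your proof is correct and follows essentially the same route as the paper's: Cauchy--Schwarz with balanced powers of $h_T$, the trace inequality \eqref{trace-inequality}, and the approximation estimates \eqref{est1}, \eqref{est2}, \eqref{est4} applied with $r=k$ and $s=0,1$. The only substantive difference is that your explicit reduction $\|Q_0p-Q_bp\|_{\partial T}\le 2\|p-Q_0p\|_{\partial T}$ via the $L^2(e)$-optimality of $Q_b$ spells out a step the paper passes over silently (equivalently, one can note $Q_0p-Q_bp=Q_b(Q_0p-p)$ since $Q_0p|_e\in P_{k-1}(e)\subset P_k(e)$), which is a welcome clarification rather than a departure.
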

\begin{proof} 
Using the Cauchy-Schwarz inequality, the trace inequality \eqref{trace-inequality} and the estimate \eqref{est2} with $r=k$ and $s=0, 1$, we obtain: 
\begin{equation*}
\begin{split}
&|\ell_1(\bu, \bv)| \\=&|\sum_{T\in {\cal T} _h}\langle \nu (\bv_b-\bv_0)\times \bn, (I-{\cal Q}_h^{r})\nabla\times \bu\rangle_{\partial T}|
\\
\leq &  \big(\sum_{T\in{\cal T}_h}h_T^{-1}\| \nu (\bv_b-\bv_0)\times \bn\|^2_{\partial T}\big)^{\frac{1}{2}} \big(\sum_{T\in{\cal T}_h}h_T \| (I-{\cal Q}_h^{r})\nabla\times \bu\|^2_{\partial T}\big)^{\frac{1}{2}}  
\\
\leq & C \|\bv\|_{1,h} \big(\sum_{T\in{\cal T}_h}  \| (I-{\cal Q}_h^{r})\nabla\times \bu\|^2_{T}+h_T^2 \| (I-{\cal Q}_h^{r})\nabla\times \bu\|^2_{1, T}\big)^{\frac{1}{2}} \\
\leq &  Ch^{r} \| \bu\|_{r+1} \|\bv\|_{1,h}.
\end{split}
\end{equation*}

Using the Cauchy-Schwarz inequality, the trace inequality \eqref{trace-inequality} and the estimate \eqref{est1} with $r=k$  and $s=0, 1$  gives
\begin{equation*}
\begin{split}
& |\ell_2(\bu, q)|\\
 \leq & \Big(\sum_{T\in{\cal T}_h}  h_T  \|q_0-q_b\|_{\pT}^2\Big)^{\frac{1}{2}}\Big(\sum_{T\in{\cal T}_h}h_T^{-1}  \|(I-\bQ_0)\bu \cdot\bn\|_{\pT}^2\Big)^{\frac{1}{2}}\\
 \leq & \Big(\sum_{T\in{\cal T}_h} h_T^{-2} \|(I-\bQ_0)\bu \cdot\bn\|_{T}^2+  \|(I-\bQ_0)\bu \cdot\bn\|_{1,T}^2\Big)^{\frac{1}{2}}\3bar q\3bar_{W_h} \\
 \leq &  Ch^{k}\|\bu\|_{k+1}\3bar q\3bar_{W_h}.
  \end{split}
\end{equation*}

It follows from the Cauchy-Schwarz inequality, the trace inequality \eqref{trace-inequality} and the estimate \eqref{est4} with $r=k$  and $s=0, 1$ that
 \begin{equation*}
\begin{split}
&|s(Q_hp,q)| \\
 \leq &|\sum_{T\in {\cal T}_h}h_T\langle Q_0p-Q_bp, q_0-q_b \rangle_{\partial T}| \\
 \leq &\Big(\sum_{T\in{\cal T}_h}  h_T  \|Q_0p-Q_bp\|_{\pT}^2\Big)^{\frac{1}{2}}\Big(\sum_{T\in{\cal T}_h}h_T   \| q_0-q_b\|_{\pT}^2\Big)^{\frac{1}{2}}\\
 \leq & \Big(\sum_{T\in{\cal T}_h}   \|Q_0p- p\|_{T}^2+h_T^2  \|Q_0p- p\|_{ 1,T}^2\Big)^{\frac{1}{2}}\3bar q\3bar_{W_h}\\
  \leq & Ch^{k}\|p\|_k \3bar q\3bar_{W_h}.
  \end{split}
\end{equation*}

This completes the proof of the lemma.
\end{proof}

\begin{lemma}\label{infsup} 
    For any $q=\{q_0, q_b\}\in W_h$, there exists a $\bv_q\in V_h^0$ such that 
 \begin{equation}\label{inf1}
b(\bv_q, q)\geq C_1 \sum_{T\in {\cal T}_h}(  q_0,  q_0)_T-C_2s(q,q),        
    \end{equation}
     \begin{equation}\label{inf2}
\3bar\bv_q\3bar_{V_h}\leq  C  (\sum_{T\in {\cal T}_h}(q_0,  q_0)_T)^{\frac{1}{2}}.   
    \end{equation}
\end{lemma}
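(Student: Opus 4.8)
The plan is to prove the discrete inf--sup inequalities \eqref{inf1}--\eqref{inf2} by a \emph{global} lifting argument: I construct $\bv_q$ by projecting into $V_h^0$ a continuous vector field whose divergence reproduces the interior pressure $q_0$. The key observation motivating this choice is that \eqref{inf1} asks for control of the full $L^2$-mass $\sum_{T}(q_0,q_0)_T$, which cannot be recovered from any purely local, element-by-element construction (a local right inverse of the divergence only reproduces $\nabla q_0$-type quantities, and the $\sum_T(q_0,q_0)_T$ contribution cancels identically); hence a continuous inf--sup (Ne\v{c}as/Bogovski\u{\i}) lifting is unavoidable.

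First I would invoke the continuous divergence lifting: there exists $\tilde{\bv}\in[H_0^1(\Omega)]^3$ with $\nabla\cdot\tilde{\bv}=-q_0$ in $\Omega$ and $\|\tilde{\bv}\|_{1,\Omega}\le C\|q_0\|_{\Omega}$, and then set $\bv_q=\bQ_h\tilde{\bv}=\{\bQ_0\tilde{\bv},\bQ_b\tilde{\bv}\}$. Since $\tilde{\bv}$ vanishes on $\partial\Omega$, its tangential trace is zero there, so $\bQ_b\tilde{\bv}\times\bn=0$ on $\partial\Omega$ and $\bv_q\in V_h^0$; moreover the global $H^1$-regularity of $\tilde{\bv}$ makes $\bQ_b\tilde{\bv}$ single-valued across interior faces, which is what legitimizes $\bv_q\in V_h$. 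For the stability bound \eqref{inf2}, by Lemma~\ref{normeqva} it suffices to bound $\|\bv_q\|_{1,h}$. I would estimate the curl part $\sum_T\|\nabla\times\bQ_0\tilde{\bv}\|_T^2\le C\sum_T|\bQ_0\tilde{\bv}|_{1,T}^2\le C\|\tilde{\bv}\|_1^2$ using the $H^1$-stability of $\bQ_0$, and the face-jump part $\sum_T h_T^{-1}\|(\bQ_0\tilde{\bv}-\bQ_b\tilde{\bv})\times\bn\|_{\partial T}^2$ by splitting $\bQ_0\tilde{\bv}-\bQ_b\tilde{\bv}=(\bQ_0\tilde{\bv}-\tilde{\bv})+(\tilde{\bv}-\bQ_b\tilde{\bv})$ and applying the trace inequality \eqref{trace-inequality} together with \eqref{est1}; both terms are $\le C\|\tilde{\bv}\|_1^2\le C\|q_0\|^2$, giving \eqref{inf2}.

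For \eqref{inf1}, I would exploit the $L^2$-orthogonality $\tilde{\bv}-\bQ_0\tilde{\bv}\perp[P_k(T)]^3\ni\nabla_w q$ to write $b(\bv_q,q)=\sum_T(\bQ_0\tilde{\bv},\nabla_w q)_T=\sum_T(\tilde{\bv},\nabla_w q)_T$. Integrating by parts through \eqref{disgradient}, and repeatedly discarding $L^2$-orthogonal remainders (using $\nabla q_0\in[P_{k-1}(T)]^3$), I expect to reach $b(\bv_q,q)=\sum_T(q_0,q_0)_T+\sum_T\langle q_b-q_0,(\bQ_0\tilde{\bv}-\tilde{\bv})\cdot\bn\rangle_{\partial T}$, where the main term comes from $\nabla\cdot\tilde{\bv}=-q_0$ and where the interior-face contributions $\langle q_b,\tilde{\bv}\cdot\bn\rangle$ cancel (continuity of $\tilde{\bv}\cdot\bn$, single-valuedness of $q_b$, and $\tilde{\bv}=0$ on $\partial\Omega$). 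The remaining consistency term is bounded, by Cauchy--Schwarz, the trace inequality \eqref{trace-inequality}, and \eqref{est1}, by $C\,s(q,q)^{1/2}\|q_0\|$, which Young's inequality splits as $\tfrac12\|q_0\|^2+C\,s(q,q)$. This yields \eqref{inf1} with $C_1=\tfrac12$ and $C_2=C$.

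The main obstacle is the boundary bookkeeping attached to the global step. One must secure a lifting $\tilde{\bv}$ whose trace is compatible both with membership $\bv_q\in V_h^0$ (tangential trace zero) and with the cancellation of the interior-face flux terms, while keeping $\nabla\cdot\tilde{\bv}=-q_0$ solvable; taking $\tilde{\bv}\in[H_0^1(\Omega)]^3$ achieves all of this but carries a mean-value compatibility condition on $q_0$, and for boundary data $q_b\ne0$ on $\partial\Omega$ a residual normal-flux term $\langle q_b,\tilde{\bv}\cdot\bn\rangle_{\partial\Omega}$ must be controlled. It is precisely this constant/boundary mode that the stabilizer slack $-C_2\,s(q,q)$ on the right-hand side of \eqref{inf1} is designed to accommodate, and verifying that every face term produced by integration by parts is either a genuine cancellation or is absorbed into $s(q,q)$ -- rather than into the principal term $\sum_T(q_0,q_0)_T$ -- is the delicate part of the argument.
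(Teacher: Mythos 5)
Your proposal is correct and is essentially the paper's own argument: the paper likewise takes a global divergence lifting $\bw$ with $\nabla\cdot\bw=-q_0$ and $\|\bw\|_1\leq C\|q_0\|$, sets $\bv_q=\bQ_h\bw$, extracts the main term $\sum_{T}(q_0,q_0)_T$ (via the weak-divergence identity \eqref{l-3} rather than your direct integration by parts plus $L^2$-orthogonality, a cosmetic difference), bounds the resulting face consistency term by $C\,s(q,q)^{1/2}\big(\sum_T(q_0,q_0)_T\big)^{1/2}$ using the trace inequality and projection approximation estimates, and concludes with Young's inequality, while the stability bound \eqref{inf2} is proved exactly as you describe. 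The boundary and mean-value compatibility bookkeeping that you flag as the main obstacle is in fact glossed over by the paper itself (it never specifies the function space for $\bw$ nor verifies $\bv_q\in V_h^0$), so your treatment is no less complete than the published proof.
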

\begin{proof} 
Using \eqref{disgradient*} and \eqref{disdivnew} gives
\begin{equation}\label{ss} 
\begin{split}
b(\bv,q)&= \sum_{T\in {\cal T}_h}(\bv_0, \nabla_w q)_T\\
&= \sum_{T\in {\cal T}_h}-(q_0, \nabla \cdot \bv_0)_T+\langle q_b, \bv_0\cdot\bn\rangle_{\partial T}\\
&= \sum_{T\in {\cal T}_h}-(q_0, \nabla_w \cdot \bv)_T+\langle q_0,  (\bv_b-\bv_0)\cdot\bn \rangle_{\partial T}+\langle q_b,  \bv_0\cdot\bn\rangle_{\partial T}\\
&= \sum_{T\in {\cal T}_h}-(q_0, \nabla_w \cdot \bv)_T+\langle q_0-q_b,  (\bv_b-\bv_0)\cdot\bn \rangle_{\partial T},\\
\end{split} 
\end{equation}
where we used $ \sum_{T\in {\cal T}_h} \langle q_b,  \bv_b\cdot\bn\rangle_{\partial T}=\langle q_b,  \bv_b\cdot\bn\rangle_{\partial \Omega}=0$  since $\bv_b=0$ on $\partial\Omega$.

Let $\bw$ be the solution of
\begin{equation}\label{eqs}
 \nabla \cdot\bw=-q_0.
   \end{equation}

Letting $\bv_q=\bQ_h\bw=\{\bQ_0 \bw, \bQ_b\bw\}$ in \eqref{ss}  and using \eqref{l-3}, \eqref{eqs},  Cauchy-Schwartz inequality, the trace inequality \eqref{trace-inequality} and Young's inequality  gives
\begin{equation*} 
\begin{split}
&b(\bv_q,q) \\
 = &\sum_{T\in {\cal T}_h}-(q_0, \nabla_w \cdot \bQ_h\bw)_T+\langle q_0-q_b,  (\bQ_b\bw-\bQ_0\bw)\cdot\bn \rangle_{\partial T} \\ =& \sum_{T\in {\cal T}_h}-(q_0, {\mathcal Q}_h^r(\nabla  \cdot  \bw))_T+\langle q_0-q_b,  (\bQ_b\bw-\bQ_0\bw)\cdot\bn \rangle_{\partial T} \\
 =& \sum_{T\in {\cal T}_h} (q_0, {\mathcal Q}_h^r q_0)_T+\langle q_0-q_b,  (\bQ_b\bw-\bQ_0\bw)\cdot\bn \rangle_{\partial T} \\
  \geq &    \sum_{T\in {\cal T}_h}  ( q_0,  q_0)_T-C (\sum_{T\in {\cal T}_h} h_T\|q_0-q_b\|_{\partial T}^2)^{\frac{1}{2}}\\&\cdot  (\sum_{T\in {\cal T}_h}h_T^{-1}\|(\bQ_b\bw-\bQ_0\bw)\cdot\bn \|_{\partial T}^2)^{\frac{1}{2}}\\
   \geq &      \sum_{T\in {\cal T}_h}  ( q_0,  q_0)_T-C (\sum_{T\in {\cal T}_h} h_T\|q_0-q_b\|_{\partial T}^2)^{\frac{1}{2}} \\&\cdot (\sum_{T\in {\cal T}_h}h_T^{-2}\|(\bQ_b\bw-\bQ_0\bw)\cdot\bn \|_{  T}^2 + \|(\bQ_b\bw-\bQ_0\bw)\cdot\bn \|_{1,  T}^2)^{\frac{1}{2}}
  \end{split} 
\end{equation*}
and
\begin{equation}\label{ssw} 
\begin{split}
&b(\bv_q,q) \\
 \geq &      \sum_{T\in {\cal T}_h}  ( q_0,  q_0)_T-C (\sum_{T\in {\cal T}_h} h_T\|q_0-q_b\|_{\partial T}^2)^{\frac{1}{2}}  \\&\cdot(\sum_{T\in {\cal T}_h}h_T^{-2}\| \bw-\bQ_0\bw  \|_{  T}^2+ \| \bw-\bQ_0\bw  \|_{1,  T}^2)^{\frac{1}{2}}\\
 \geq   &    \sum_{T\in {\cal T}_h}  ( q_0,  q_0)_T-C (\sum_{T\in {\cal T}_h} h_T\|q_0-q_b\|_{\partial T}^2)^{\frac{1}{2}}  (\sum_{T\in {\cal T}_h} \|\bw\|_{1,T}^2)^{\frac{1}{2}}  \\
 \geq  &     \sum_{T\in {\cal T}_h}  ( q_0,  q_0)_T-C (\sum_{T\in {\cal T}_h} h_T\|q_0-q_b\|_{\partial T}^2)^{\frac{1}{2}} (\sum_{T\in {\cal T}_h} ( q_0,  q_0)_T)^{\frac{1}{2}}\\ 
 \geq   &    \sum_{T\in {\cal T}_h}  ( q_0,  q_0)_T- \frac{C}{\epsilon} \sum_{T\in {\cal T}_h} h_T\|q_0-q_b\|^2_{\partial T}  -C\epsilon \sum_{T\in {\cal T}_h} ( q_0,  q_0)_T\\
 \geq  &   (1-C\epsilon)  \sum_{T\in {\cal T}_h}  ( q_0,  q_0)_T- \frac{C}{\epsilon} \sum_{T\in {\cal T}_h} h_T\|q_0-q_b\|^2_{\partial T}  \\
 \geq &     C_1 \sum_{T\in {\cal T}_h}  ( q_0,  q_0)_T- C_2\sum_{T\in {\cal T}_h} h_T\|q_0-q_b\|^2_{\partial T},
\end{split} 
\end{equation}
 where we can choose $\epsilon$   such that $1-C\epsilon>C_1$  and $\frac{C}{\epsilon}<C_2$ for some constants $C$, $C_1$, $C_2$.

Regarding to \eqref{inf2}, using \eqref{discurlcurlnew}, the trace inequality \eqref{trace} and Cauchy-Schwartz inequality gives
\begin{equation}\label{eqss}
    \|\nabla_w\times \bv\|^2_T\leq \|\nabla\times \bv_0\|^2_T+Ch_T^{-1}\|\bv_0-\bv_b\|^2_{\partial T}.
\end{equation}
Let $\bw$ be the solution of \eqref{eqs}.
Letting $\bv_q=\bQ_h\bw=\{\bQ_0 \bw, \bQ_b\bw\}$ in \eqref{eqss} gives
\begin{equation*}
\begin{split}
   \sum_{T\in {\cal T}_h} \|\nabla_w\times \bv_q\|^2_T\leq &   \sum_{T\in {\cal T}_h}\|\nabla\times (\bQ_0 \bw)\|^2_T+   Ch_T^{-1}\|\bQ_0 \bw-\bQ_b \bw\|^2_{\partial T}\\
   \leq &   \sum_{T\in {\cal T}_h}\|\nabla\times (\bQ_0 \bw)\|^2_T+Ch_T^{-2}\|\bQ_0 \bw-\bQ_b \bw\|^2_{T}+C \|\bQ_0 \bw-\bQ_b \bw\|^2_{1,T}  \\
    \leq &   \sum_{T\in {\cal T}_h}\| \bQ_0 \bw \|^2_{1,T}+Ch_T^{-2}\|\bQ_0 \bw- \bw\|^2_{T}+C \|\bQ_0 \bw- \bw\|^2_{1,T}  \\ 
     \leq &   \sum_{T\in {\cal T}_h}\|   \bw \|^2_{1,T}   
          \leq     \sum_{T\in {\cal T}_h}\|   q \|^2_{0,T},   
\end{split}
\end{equation*}
where we used the trace inequality \eqref{trace-inequality} and \eqref{eqs}.  This completes the proof for \eqref{inf2}.

\end{proof}

\begin{lemma}
Assume  the exact solution $\bu$ of the Maxwell equation \eqref{model} is sufficiently regular such that $\bu\in [H^{k+1} (\Omega)]^3$. There exists a constant $C$, such that the following estimate holds true; i.e.,
\begin{equation}\label{errorestinew}
\3bar \bu-\bQ_h\bu \3bar_{V_h} \leq Ch^k\|\bu\|_{k+1}.
\end{equation}
\end{lemma}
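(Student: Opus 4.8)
The plan is to estimate the energy norm $\3bar \bu-\bQ_h\bu \3bar_{V_h}$ by working element by element. First I would recall from the definition \eqref{3norm} that $\3bar \bu-\bQ_h\bu \3bar_{V_h}^2 = \sum_{T\in {\cal T}_h} \|\nabla_w\times(\bu-\bQ_h\bu)\|_T^2$, so the task reduces to bounding $\|\nabla_w\times(\bu-\bQ_h\bu)\|_T$ on each element. The natural tool is the commutativity property \eqref{l}, which says $\nabla_w\times \bu = {\cal Q}_h^{r}(\nabla\times \bu)$ for $\bu\in H(curl;T)$.

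The key step is to compute $\nabla_w\times \bQ_h\bu$ directly from the definition \eqref{discurlcurl}. Testing against an arbitrary $\bq\in[P_r(T)]^3$ and using that $\bQ_h\bu=\{\bQ_0\bu,\bQ_b\bu\}$, I would write
\begin{equation*}
(\nabla_w\times \bQ_h\bu, \bq)_T=(\bQ_0\bu, \nabla\times \bq)_T-\langle \bQ_b\bu\times\bn, \bq\rangle_{\partial T}.
\end{equation*}
Since $\bQ_0$ is the $L^2$ projection onto $[P_k(T)]^3$ and $\nabla\times\bq\in[P_{r-1}(T)]^3$, for $r\le k+1$ the term $(\bQ_0\bu,\nabla\times\bq)_T$ equals $(\bu,\nabla\times\bq)_T$; similarly $\bQ_b$ can be removed on the boundary term against polynomial $\bq$. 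Integrating by parts back then gives $(\nabla_w\times \bQ_h\bu, \bq)_T=(\nabla\times\bu,\bq)_T-\langle (\bu-\bu)\times\bn,\bq\rangle_{\partial T}$, which collapses to ${\cal Q}_h^{r}(\nabla\times\bu)$ exactly as in \eqref{l}. Hence $\nabla_w\times \bQ_h\bu = \nabla_w\times\bu = {\cal Q}_h^{r}(\nabla\times\bu)$, and therefore $\nabla_w\times(\bu-\bQ_h\bu)=0$ on each $T$, giving the estimate trivially with the right-hand side being zero.

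The subtle point — and the main obstacle — is whether the projection degrees truly make $\nabla_w\times\bQ_h\bu$ coincide with $\nabla_w\times\bu$, i.e. whether $\bQ_0$ and $\bQ_b$ can be dropped against the test polynomials. If the degree $r$ used for the weak curl exceeds what $\bQ_0$ preserves, the cancellation fails and one genuinely gets a nonzero approximation error governed by $(I-{\cal Q}_h^r)(\nabla\times\bu)$ and the boundary mismatch $(\bQ_b-I)\bu$. In that regime I would instead bound $\|\nabla_w\times(\bu-\bQ_h\bu)\|_T$ by choosing $\bq=\nabla_w\times(\bu-\bQ_h\bu)$ in \eqref{discurlcurlnew}, producing a volume term $(\nabla\times(\bu-\bQ_0\bu),\bq)_T$ and a boundary term $\langle(\bu-\bQ_0\bu)\times\bn-((\bu-\bQ_b\bu)\times\bn),\bq\rangle_{\partial T}$. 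Applying Cauchy–Schwarz, the trace inequality \eqref{trace-inequality} for the boundary contributions, and the inverse trace inequality \eqref{trace} to absorb the factor $\|\bq\|_{\partial T}$ into $h_T^{-1/2}\|\bq\|_T$, I would divide through by $\|\bq\|_T$. This reduces everything to the approximation estimates \eqref{est1} and \eqref{est2}, which with $r=k$, $s=0,1$ supply $\|\nabla\times\bu-{\cal Q}_h^k(\nabla\times\bu)\|$-type bounds of order $h^k\|\bu\|_{k+1}$. Summing over $T\in{\cal T}_h$ and taking the square root then yields \eqref{errorestinew}.
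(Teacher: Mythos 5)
Your proposal hedges between two arguments, and the hedge needs to be resolved: the first argument (exact cancellation, giving $\nabla_w\times(\bu-\bQ_h\bu)=0$) definitively fails in this paper, and your fallback argument is in fact the paper's proof. The weak curl here is computed with $r=2N+k-1$ (or $r=N+k-1$ on convex elements, per the remarks after Lemma 5.1), where $N\ge 4$ is the number of faces of $T$; hence $r>k+1$, so $\nabla\times\bq$ has degree up to $r-1>k$ and $\bq|_e$ has degree $r>k$, and neither $\bQ_0$ nor $\bQ_b$ can be dropped against the test polynomials. Indeed, the deliberately large $r$ is the whole point of the method (it is what permits removing the curl stabilizer), and the lemma is non-trivial precisely because the commutation $\nabla_w\times\bQ_h\bu={\cal Q}_h^r(\nabla\times\bu)$ is lost. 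Your second argument coincides with the paper's: apply \eqref{discurlcurlnew} to $\bu-\bQ_h\bu$, whose boundary datum is $(\bQ_0\bu-\bQ_b\bu)\times\bn$, test with $\bq=\nabla_w\times(\bu-\bQ_h\bu)$, and use Cauchy--Schwarz; for the boundary term note $\|\bQ_0\bu-\bQ_b\bu\|_{\partial T}\le\|\bu-\bQ_0\bu\|_{\partial T}$ (since $\bQ_b$ reproduces the trace of $\bQ_0\bu$ on each face), then apply the trace inequality \eqref{trace-inequality} to $\bu-\bQ_0\bu$ and the inverse trace inequality \eqref{trace} to $\|\bq\|_{\partial T}$. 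One small correction to your bookkeeping: only \eqref{est1} (with $r=k$, $s=0,1$) is needed, since the volume term is $\nabla\times(\bu-\bQ_0\bu)$, controlled by $\|\bu-\bQ_0\bu\|_{1,T}$, rather than $(I-{\cal Q}_h^r)\nabla\times\bu$; estimate \eqref{est2} plays no role here. Dividing by the energy norm (equivalently, your elementwise division by $\|\bq\|_T$ followed by summing squares) then yields \eqref{errorestinew}.
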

\begin{proof}
Using \eqref{discurlcurlnew}, Cauchy-Schwarz inequality, the trace inequalities \eqref{trace-inequality}-\eqref{trace}, we have  
\begin{equation*}
\begin{split}
&\quad\sum_{T\in {\cal T}_h}(\nabla_w\times(\bu-\bQ_h\bu), \bv)_T\\
&=\sum_{T\in {\cal T}_h}(\nabla\times (\bu-\bQ_0\bu),  \bv)_T-\langle (\bQ_0\bu-\bQ_b\bu)\times \bn, \bv \rangle_{\partial T}\\
&\leq \Big(\sum_{T\in {\cal T}_h}\|\nabla\times (\bu-\bQ_0\bu)\|^2_T\Big)^{\frac{1}{2}} \Big(\sum_{T\in {\cal T}_h}\|\bv\|_T^2\Big)^{\frac{1}{2}}\\&\quad+ \Big(\sum_{T\in {\cal T}_h} \|\bQ_0\bu-\bQ_b\bu\|_{\partial T} ^2\Big)^{\frac{1}{2}}\Big(\sum_{T\in {\cal T}_h} \|\bv\|_{\partial T}^2\Big)^{\frac{1}{2}}\\
&\leq\Big(\ \sum_{T\in {\cal T}_h} \|\nabla\times (\bu-\bQ_0\bu)\|_T^2\Big)^{\frac{1}{2}}\Big(\sum_{T\in {\cal T}_h} \|\bv\|_T^2\Big)^{\frac{1}{2}}\\&\quad+\Big(\sum_{T\in {\cal T}_h}h_T^{-1} \|\bQ_0\bu-\bu\|_{T} ^2+h_T \|\bQ_0\bu-\bu\|_{1,T} ^2\Big)^{\frac{1}{2}}\Big(\sum_{T\in {\cal T}_h}Ch_T^{-1}\|\bv\|_T^2\Big)^{\frac{1}{2}}\\
&\leq Ch^k\|\bu\|_{k+1}\Big(\sum_{T\in {\cal T}_h} \|\bv\|_T^2\Big)^{\frac{1}{2}},
\end{split}
\end{equation*}
 for any $\bv\in [P_r(T)]^3$. 
 
Letting $\bv=\nabla_w\times(\bu-\bQ_h\bu)$ gives 
$$
\sum_{T\in {\cal T}_h}(\nabla_w\times (\bu-\bQ_h\bu), \nabla_w\times (\bu-\bQ_h\bu))_T\leq 
 Ch^k\|\bu\|_{k+1}\3bar \bu-\bQ_h\bu \3bar_{V_h}.$$  
 
 This completes the proof of the lemma.
\end{proof}

\begin{theorem}\label{thm}
Let $k\geq 1$. Assume that the exact solutions of the Maxwell equations \eqref{model} possess sufficient regularity, such that  $\bu\in [H^{k+1}(\Omega)]^3$ and $p\in H^{k}(\Omega)$.  Let $(\bu_h, p_h)\in V_h\times W_h^0$ denote the  WG approximations obtained from the WG scheme \eqref{32}-\eqref{2}. 
Then, the following error estimate holds: 
 \begin{equation}\label{estimate1}
\3bar \be_h\3bar_{V_h}+\3bar \epsilon_h \3bar_{W_h} \leq Ch^k(\|\bu\|_{k+1}+\|p\|_k),
\end{equation} 
 \begin{equation}\label{estimate2}
(\sum_{T\in {\cal T}_h} (\epsilon_{h,0}, \epsilon_{h,0})_T)^{\frac{1}{2}}\leq Ch^{k}(\|\bu\|_{k+1}+\|p\|_k).
\end{equation} 
\end{theorem}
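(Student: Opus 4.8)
The plan is to run the standard saddle-point energy argument on the \emph{discretized} error and then to recover $L^2$-control of the pressure error through the inf-sup property of Lemma~\ref{infsup}. First I would split the velocity error as $\be_h = (\bu - \bQ_h\bu) + \bzeta_h$, where $\bzeta_h := \bQ_h\bu - \bu_h \in V_h^0$; the boundary condition $\bzeta_{h,b}\times\bn = 0$ on $\partial\Omega$ holds because, each face being flat, the $L^2$ projection commutes with the constant-coefficient cross product by $\bn$, so $\bQ_b\bu \times \bn = Q_b\bphi = \bu_b\times\bn$. Substituting this into the error equations \eqref{sehv}--\eqref{sehv2}, testing with $\bv = \bzeta_h$ and $q = \epsilon_h$ respectively and adding, the coupling terms $\pm b(\bzeta_h,\epsilon_h)$ cancel and I obtain the identity
\[
a(\bzeta_h,\bzeta_h) + s(\epsilon_h,\epsilon_h) = \ell_1(\bu,\bzeta_h) - a(\bu-\bQ_h\bu,\bzeta_h) + s(Q_hp,\epsilon_h) + \ell_2(\bu,\epsilon_h) - b(\bu-\bQ_h\bu,\epsilon_h).
\]
The crucial simplification is that $b(\bu-\bQ_h\bu,\epsilon_h) = \sum_{T}(\bu-\bQ_0\bu,\nabla_w\epsilon_h)_T = 0$, since $\nabla_w\epsilon_h\in[P_k(T)]^3$ and $\bQ_0$ is the $L^2$ projection onto $[P_k(T)]^3$.

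Next I would bound the two sides. The left-hand side is bounded below by $C(\3bar\bzeta_h\3bar_{V_h}^2 + \3bar\epsilon_h\3bar_{W_h}^2)$ using the lower bound on $\nu$ and $s(\epsilon_h,\epsilon_h)=\3bar\epsilon_h\3bar_{W_h}^2$. On the right, $\ell_1(\bu,\bzeta_h)$ and $a(\bu-\bQ_h\bu,\bzeta_h)$ are each controlled by $Ch^k\|\bu\|_{k+1}\3bar\bzeta_h\3bar_{V_h}$ (via \eqref{error2}, the norm equivalence \eqref{normeq}, Cauchy--Schwarz, and the projection bound \eqref{errorestinew}), while $s(Q_hp,\epsilon_h)$ and $\ell_2(\bu,\epsilon_h)$ are controlled by $Ch^k(\|\bu\|_{k+1}+\|p\|_k)\3bar\epsilon_h\3bar_{W_h}$ (via \eqref{error3}--\eqref{error4}). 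Dividing through gives $\3bar\bzeta_h\3bar_{V_h} + \3bar\epsilon_h\3bar_{W_h}\leq Ch^k(\|\bu\|_{k+1}+\|p\|_k)$, and the triangle inequality together with \eqref{errorestinew} upgrades this to the claimed bound \eqref{estimate1}.

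For \eqref{estimate2} I would invoke Lemma~\ref{infsup} with $q=\epsilon_h$ to produce $\bv_{\epsilon_h}\in V_h^0$ satisfying $b(\bv_{\epsilon_h},\epsilon_h)\geq C_1\|\epsilon_{h,0}\|^2 - C_2 s(\epsilon_h,\epsilon_h)$ and $\3bar\bv_{\epsilon_h}\3bar_{V_h}\leq C\|\epsilon_{h,0}\|$, where $\|\epsilon_{h,0}\|^2 := \sum_{T}(\epsilon_{h,0},\epsilon_{h,0})_T$. Rewriting $b(\bv_{\epsilon_h},\epsilon_h) = a(\be_h,\bv_{\epsilon_h}) - \ell_1(\bu,\bv_{\epsilon_h})$ from \eqref{sehv}, bounding the right side by $C(\3bar\be_h\3bar_{V_h} + h^k\|\bu\|_{k+1})\3bar\bv_{\epsilon_h}\3bar_{V_h}$, then substituting the inf-sup bound on $\3bar\bv_{\epsilon_h}\3bar_{V_h}$ and applying Young's inequality to absorb $\|\epsilon_{h,0}\|^2$, I arrive at $\|\epsilon_{h,0}\|^2 \leq C(\3bar\epsilon_h\3bar_{W_h}^2 + \3bar\be_h\3bar_{V_h}^2 + h^{2k}\|\bu\|_{k+1}^2)$. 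Inserting the already-proven \eqref{estimate1} then delivers \eqref{estimate2}.

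I expect the main obstacle to lie in this last part rather than in the energy estimate: the energy identity controls only the stabilizer seminorm $\3bar\epsilon_h\3bar_{W_h}$ of the pressure error, which does not see $\epsilon_{h,0}$ directly, so the full $L^2$ control of $\epsilon_{h,0}$ rests entirely on the inf-sup construction of Lemma~\ref{infsup} and on correctly absorbing the resulting $\|\epsilon_{h,0}\|^2$ term through Young's inequality. A secondary point demanding care is the verification that $\bzeta_h\in V_h^0$ and the orthogonality $b(\bu-\bQ_h\bu,\epsilon_h)=0$, since the clean cancellation in the energy identity depends on both.
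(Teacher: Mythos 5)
Your proposal is correct, and its skeleton is the same as the paper's: an energy argument built on the error equations \eqref{sehv}--\eqref{sehv2} for \eqref{estimate1}, followed by the inf-sup construction of Lemma \ref{infsup} plus \eqref{sehv} and Young's inequality for \eqref{estimate2}; your treatment of \eqref{estimate2} is essentially identical to the paper's. The one genuine difference lies in the energy step. The paper tests \eqref{sehv} with $\bv=\bQ_h\bu-\bu_h$ but keeps the full error $\be_h=\bu-\bu_h$ on the left, so the term $b(\bQ_h\bu-\bu,\epsilon_h)$ survives on the right and is then estimated in \eqref{new1}--\eqref{new4}; that route forces a bound on $\sum_T\|\nabla_w\epsilon_h\|_T^2$ in terms of $\|\nabla(Q_0p-p_0)\|_T$, where $p_0$ is the unknown discrete solution, which is an awkward (and as written, not clearly justified) intermediate step. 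You instead observe that $b(\bu-\bQ_h\bu,\epsilon_h)=\sum_T(\bu-\bQ_0\bu,\nabla_w\epsilon_h)_T=0$, since $\nabla_w\epsilon_h|_T=\nabla_{w,k,T}\epsilon_h\in[P_k(T)]^3$ and $\bQ_0$ is exactly the $L^2$ projection onto $[P_k(T)]^3$. This orthogonality is valid, it is the same device the paper itself uses later for the term $I_6$ in the $L^2$ estimate, and it makes the $H^1$-norm proof cleaner: the problematic bounds \eqref{new1} and \eqref{new4} are simply not needed. Your splitting $\be_h=(\bu-\bQ_h\bu)+\bzeta_h$ with the triangle inequality at the end, versus the paper's direct identity for $\3bar\be_h\3bar_{V_h}^2$, is only a cosmetic reshuffling. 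Your verification that $\bzeta_h\in V_h^0$ (via $\bQ_b\bu\times\bn=Q_b(\bu\times\bn)=Q_b\bphi$ on flat boundary faces, using that $\bn$ is constant per face) is a point the paper glosses over entirely, so including it is a small but real improvement in rigor.
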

\begin{proof}
Letting $\bv=\bQ_h\bu-\bu_h=\bQ_h\bu-\bu+\be_h$ in (\ref{sehv}) and $q=\epsilon_h$ in (\ref{sehv2})  and adding the two equations, we have 
\begin{equation}\label{new5}
\begin{split}
& \3bar \be_h\3bar^2_{V_h} +\3bar \epsilon_h\3bar^2_{W_h}\\ =&\ell_1(\bu, \bQ_h\bu-\bu_h)+s(Q_hp, \epsilon_h)+\ell_2(\bu, \epsilon_h) \\& -a(\be_h, \bQ_h\bu-\bu)+b(\bQ_h\bu-\bu, \epsilon_h).\\
\end{split}
\end{equation} 
 Using \eqref{error2}, \eqref{normeq} and \eqref{errorestinew}, we have 
\begin{equation}\label{new6}
    \begin{split}
  &\ell_1(\bu, \bQ_h\bu-\bu_h)\\
  \leq & Ch^k\|\bu\|_{k+1} \3bar  \bQ_h\bu-\bu_h \3bar_{V_h}\\
   \leq & Ch^k\|\bu\|_{k+1} (\3bar  \bQ_h\bu-\bu \3bar_{V_h}+\3bar   \bu-\bu_h \3bar_{V_h})\\
  \leq &     Ch^{2k}\|\bu\|^2_{k+1}+ Ch^k\|\bu\|_{k+1}\3bar   \be_h \3bar_{V_h}.
\end{split}
\end{equation} 

Using  \eqref{error3},   \eqref{error4}, \eqref{normeq} and \eqref{errorestinew}, we have 
\begin{equation}\label{new2}
    \begin{split}
  & s(Q_hp, \epsilon_h)+\ell_2(\bu, \epsilon_h)    \\
  \leq &    Ch^k\|p\|_k \3bar\epsilon_h\3bar_{W_h}+ Ch^k\|\bu\|_{k+1} \3bar\epsilon_h\3bar_{W_h}.
 \end{split}
\end{equation}

Using Cauchy-Schwarz inequality and \eqref{errorestinew}, we have
\begin{equation}\label{new3}
    \begin{split}
a(\be_h, \bQ_h\bu-\bu)\leq \3bar \be_h\3bar_{V_h} \3bar \bQ_h\bu-\bu\3bar_{V_h} \leq Ch^k\|\bu\|_{k+1} \3bar \be_h\3bar.
 \end{split}
\end{equation}

Using \eqref{disgradient*} and the trace inequality \eqref{trace} gives
\begin{equation}\label{new1}
    \begin{split}
\sum_{T\in {\cal T}_h}\|\nabla_w \epsilon_h\|^2_T
\leq &\sum_{T\in {\cal T}_h} \|\nabla \epsilon_0\|^2_T+Ch_T^{-1} \|\epsilon_0-\epsilon_b\|^2_{\partial T}\\ 
\leq & \sum_{T\in {\cal T}_h} \|\nabla (Q_0p-p_0)\|^2_T+Ch_T^{-2}  \3bar\epsilon_h\3bar_{W_h}^2\\
\leq & Ch_T^{-2} (h_T^{2k}\|p\|^2_k+\3bar\epsilon_h\3bar^2_{W_h}).
\end{split}
\end{equation}
Using Cauchy-Schwarz inequality, Cauchy-Schwarz inequality  and \eqref{new1} gives 
\begin{equation}\label{new4}
    \begin{split}
        b(Q_h\bu-\bu, \epsilon_h)=&
        \sum_{T\in {\cal T}_h}(Q_0\bu-\bu, \nabla_w \epsilon_h)_T\\\leq & \big(\sum_{T\in {\cal T}_h} \|Q_0\bu-\bu\|_T^2\big)^{\frac{1}{2}}
   \big(\sum_{T\in {\cal T}_h} \|\nabla_w \epsilon_h\|_T^2\big)^{\frac{1}{2}}\\
   \leq & Ch^{k+1}\|\bu\|_{k+1} h_T^{-1} (h_T^{k}\|p\|_k+\3bar\epsilon_h\3bar_{W_h})\\
   \leq & Ch^{k}\|\bu\|_{k+1}  (h_T^{k}\|p\|_k+\3bar\epsilon_h\3bar_{W_h})\\
   \leq  & Ch^{2k}\|\bu\|^2_{k+1}  +C h_T^{2k}\|p\|^2_k  +Ch^{k}\|\bu\|_{k+1}  \3bar\epsilon_h\3bar_{W_h}. 
    \end{split}
\end{equation}

Substituting \eqref{new6}, \eqref{new2}, \eqref{new3}, \eqref{new4}
into \eqref{new5} gives \eqref{estimate1}.

Next, we prove \eqref{estimate2}. Let $\bw$ be the solution of $\nabla\cdot\bw=-\epsilon_0$. 
Letting $\bv_{\epsilon_h}=\bQ_h\bw=\{\bQ_0 \bw, \bQ_b\bw\}$  in \eqref{inf1} and  \eqref{sehv}  and using Cauchy-Schwartz inequality, \eqref{error2}, \eqref{normeq}, and \eqref{inf2}, gives 
\begin{equation*}
\begin{split}
&\quad   \sum_{T\in {\cal T}_h} (\epsilon_{h, 0}, \epsilon_{h, 0})_T \\&\leq |b(\bv_{\epsilon_h}, \epsilon_h)|+C|s(\epsilon_h, \epsilon_h)|\\
&\leq  |a(\be_h, \bv_{\epsilon_h})|+|\ell_1(\bu, \bv_{\epsilon_h})|+C\3bar \epsilon_h\3bar_{W_h}^2\\
&\leq  \3bar\be_h\3bar_{V_h} \3bar\bv_{\epsilon_h}\3bar_{V_h}+Ch^k\|\bu\|_{k+1}\|\bv_{\epsilon_h}\|_{1,h}+C\3bar \epsilon_h\3bar_{W_h}^2
\\
&\leq  \3bar\be_h\3bar_{V_h} \3bar\bv_{\epsilon_h}\3bar_{V_h}+Ch^k\|\bu\|_{k+1}\3bar\bv_{\epsilon_h}\3bar_{V_h}+C\3bar \epsilon_h\3bar_{W_h}^2\\
&\leq C( \3bar\be_h\3bar_{V_h}  +Ch^k\|\bu\|_{k+1})  (\sum_{T\in {\cal T}_h} ( \epsilon_{h, 0}, \epsilon_{h, 0})_T)^{\frac{1}{2}} +C\3bar \epsilon_h\3bar_{W_h}^2,\\
\end{split}
\end{equation*}
which, using \eqref{estimate1}, completes the proof of \eqref{estimate2}.

\end{proof}

\section{$L^2$ Error Estimates}
In this section, we derive  an $L^2$ Error Estimate for the WG approximation. To achieve this,   we consider an auxiliary problem of finding $(\bphi; \xi)$ such that
\begin{equation}\label{dual}
\begin{split}
\nabla \times (\nu\nabla\times \bphi)-\nabla \xi =&\bzeta_0, \qquad \text{in}\ \Omega,\\
\nabla\cdot\bphi =& 0, \qquad\text{in} \ \Omega,\\
\bphi\times\bn=& 0, \qquad\text{on} \ \partial\Omega,\\ 
\xi=& 0, \qquad\text{on} \ \partial\Omega. 
\end{split}
\end{equation} 
where we recall that the error function  $\be_h=\bu-\bu_h=\{\be_0,\be_b\}$. We further introduce $\bzeta_h=\bQ_h\bu-\bu_h=\{\bQ_0\bu-\bu_0,\bQ_b\bu-\bu_b\}=\{\bzeta_0,\bzeta_b\}$. 

Assume that the regularity property for the auxiliary problem \eqref{dual} holds, in the sense that $\bphi$ and $\xi$  satisfy the following regularity conditions:
 \begin{equation}\label{regu}
 \|\bphi\|_{1+\alpha}  +\|\xi\|_{\alpha}\leq C\| \bzeta_0\|,
\end{equation}
 where $0<\alpha\leq 1$.

 \begin{theorem}
Assume that the exact solutions of the Maxwell equations \eqref{model4} are sufficiently regular such that    $(\bu, p)\in [H^{k+1}(\Omega)]^3 \times H^k(\Omega)$. Let $(\bu_h, p_h)\in V_h\times W_h^0$ be the solutions of weak Galerkin scheme \eqref{32}-\eqref{2}. Additionally, assume that the regularity assumption \eqref{regu} for the dual problem \eqref{dual} holds true. Then, the following estimate holds:
 \begin{equation*}
      \|\be_0\|  \leq  Ch^{k+\alpha}(\|\bu\|_{1+k}+\|p\|_k).
 \end{equation*}
 \end{theorem}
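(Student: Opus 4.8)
The plan is a duality (Aubin--Nitsche) argument driven by the auxiliary problem \eqref{dual}, whose data is exactly $\bzeta_0=\bQ_0\bu-\bu_0$. First I would reduce the statement to a bound on $\|\bzeta_0\|$: since $\be_0=\bu-\bu_0=(\bu-\bQ_0\bu)+\bzeta_0$ and $\|\bu-\bQ_0\bu\|\le Ch^{k+1}\|\bu\|_{k+1}$ by \eqref{est1} while $\alpha\le 1$, it suffices to show $\|\bzeta_0\|\le Ch^{k+\alpha}(\|\bu\|_{1+k}+\|p\|_k)$. Because $\bzeta_0$ is the right-hand side of \eqref{dual}, I may write $\|\bzeta_0\|^2=(\nabla\times(\nu\nabla\times\bphi),\bzeta_0)-(\nabla\xi,\bzeta_0)$.

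The central step is a dual error identity. The gradient term is exact: using \eqref{l-2} and $\bzeta_0\in[P_k(T)]^3$ one gets $(\nabla\xi,\bzeta_0)=b(\bzeta_h,Q_h\xi)$, where $\bzeta_h=\bQ_h\bu-\bu_h$ and $Q_h\xi\in W_h^0$ is admissible since $\xi=0$ on $\partial\Omega$. The curl term is handled by element-wise integration by parts together with \eqref{discurlcurlnew}, \eqref{l}, and the continuity of $\bzeta_b\times\bn$ and of $\nabla\times\bphi$ across interior faces; this rewrites $(\nabla\times(\nu\nabla\times\bphi),\bzeta_0)$ as $a(\bQ_h\bphi,\bzeta_h)$ plus a boundary remainder $\ell_1^{*}(\bphi,\bzeta_h)$ of exactly the form of $\ell_1$ but carrying $(I-{\cal Q}_h^r)(\nabla\times\bphi)$ in place of $(I-{\cal Q}_h^r)(\nabla\times\bu)$ (for nonconstant $\nu$ an extra commutator factor $(I-{\cal Q}_h^r)(\nu\,\cdot\,)$ of order $O(h)$ appears). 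Thus $\|\bzeta_0\|^2=a(\bQ_h\bphi,\bzeta_h)-b(\bzeta_h,Q_h\xi)+\ell_1^{*}(\bphi,\bzeta_h)$, with $\bQ_h\bphi\in V_h^0$ admissible because $\bphi\times\bn=0$.

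Next I would substitute the primal error equations. Writing $\bzeta_h=\be_h+(\bQ_h\bu-\bu)$, the symmetry of $a$ and \eqref{sehv} with $\bv=\bQ_h\bphi$ give $a(\bQ_h\bphi,\be_h)=b(\bQ_h\bphi,\epsilon_h)+\ell_1(\bu,\bQ_h\bphi)$, and \eqref{sehv2} with $q=Q_h\xi$ gives $b(\be_h,Q_h\xi)=s(Q_hp,Q_h\xi)+\ell_2(\bu,Q_h\xi)-s(\epsilon_h,Q_h\xi)$. The two pure-projection leftovers are harmless: $b(\bQ_h\bu-\bu,Q_h\xi)=(\bQ_0\bu-\bu,\bQ_0\nabla\xi)=0$ by $L^2$-orthogonality of $\bQ_0$, and $a(\bQ_h\bphi,\bQ_h\bu-\bu)=(\nu{\cal Q}_h^r(\nabla\times\bphi),-(I-{\cal Q}_h^r)(\nabla\times\bu))$ vanishes for piecewise-constant $\nu$ and is $O(h^{k+1})$ otherwise. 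Moreover $b(\bQ_h\bphi,\epsilon_h)$ collapses to a boundary remainder, since $\nabla_w\cdot\bQ_h\bphi={\cal Q}_h^r(\nabla\cdot\bphi)=0$ by \eqref{l-3} and $\nabla\cdot\bphi=0$; by \eqref{ss} it equals $\sum_T\langle \epsilon_0-\epsilon_b,(\bQ_b\bphi-\bQ_0\bphi)\cdot\bn\rangle_{\partial T}$. Collecting terms, $\|\bzeta_0\|^2$ is a sum of $\ell_1(\bu,\bQ_h\bphi)$, $\ell_1^{*}(\bphi,\bzeta_h)$, $b(\bQ_h\bphi,\epsilon_h)$, $s(Q_hp,Q_h\xi)$, $\ell_2(\bu,Q_h\xi)$, $s(\epsilon_h,Q_h\xi)$ plus the negligible projection pieces.

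Finally I would estimate each surviving term as a product of a primal factor of order $h^k$ and a dual factor of order $h^\alpha$. Crucially, I would \emph{not} use the generic bounds of Lemma \ref{lem2} (which return only $h^k$), but instead pair the two projection errors directly: for instance $\ell_1(\bu,\bQ_h\bphi)$ pairs $h_T^{-1/2}\|(\bQ_b\bphi-\bQ_0\bphi)\times\bn\|_{\partial T}\sim h^\alpha\|\bphi\|_{1+\alpha}$ (via the trace inequality \eqref{trace-inequality} and \eqref{est1} at reduced regularity) against $h_T^{1/2}\|(I-{\cal Q}_h^r)\nabla\times\bu\|_{\partial T}\sim h^k\|\bu\|_{k+1}$ (via \eqref{est2}); likewise $\ell_1^{*}$ pairs $\3bar\bzeta_h\3bar_{V_h}\le Ch^k$ (from Theorem \ref{thm} and \eqref{errorestinew}) with $h^\alpha\|\bphi\|_{1+\alpha}$, and the $s$- and $\ell_2$-terms use $\3bar Q_h\xi\3bar_{W_h}\le Ch^\alpha\|\xi\|_\alpha$ together with \eqref{error3}, \eqref{error4} and \eqref{estimate1}. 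Invoking the dual regularity \eqref{regu} to replace $\|\bphi\|_{1+\alpha}+\|\xi\|_\alpha$ by $C\|\bzeta_0\|$ yields $\|\bzeta_0\|^2\le Ch^{k+\alpha}(\|\bu\|_{1+k}+\|p\|_k)\|\bzeta_0\|$, and cancelling one factor finishes the proof. The hard part is exactly this harvesting of the extra $h^\alpha$: it rests on the cancellations from $L^2$-orthogonality and from $\nabla\cdot\bphi=0$, on re-pairing the remainders so that the low-regularity dual solution enters only through $O(h^\alpha)$ projection errors, and on controlling the trace term $\3bar Q_h\xi\3bar_{W_h}$ when $\xi$ carries only fractional regularity $H^\alpha$.
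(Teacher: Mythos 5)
Your overall strategy coincides with the paper's: the same duality argument with the auxiliary problem \eqref{dual}, the same reduction to $\|\bzeta_0\|$, the same substitution of the error equations with $\bv=\bQ_h\bphi$ and $q=Q_h\xi$, the identity $b(\bQ_h\bu-\bu,Q_h\xi)=0$, the collapse of $b(\bQ_h\bphi,\epsilon_h)$ using $\nabla\cdot\bphi=0$, and the same $h^k\times h^\alpha$ pairing for the $\ell_1$, $\ell_2$ and $s$ terms. The genuine gap is in how you dispose of the leftover $a(\bQ_h\bphi,\bQ_h\bu-\bu)$. You declare it negligible via the identities $\nabla_w\times\bQ_h\bphi={\cal Q}_h^{r}(\nabla\times\bphi)$ and $\nabla_w\times(\bQ_h\bu-\bu)=-(I-{\cal Q}_h^{r})(\nabla\times\bu)$, and both are false for this scheme. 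The commutation property \eqref{l} applies to the weak function $\{\bphi|_T,\bphi|_{\partial T}\}$ built from the exact $\bphi$, not to its projection $\{\bQ_0\bphi,\bQ_b\bphi\}$; for the projection, commutation would require the weak curl to be computed in degree $r\le k$, whereas here $r=2N+k-1>k$ --- the elevated degree is precisely what permits dropping the stabilizer, so the failure is structural, not a technicality. The second identity is impossible on its face: $\nabla_w\times(\bQ_h\bu-\bu)$ lies in $[P_r(T)]^3$ while $(I-{\cal Q}_h^{r})(\nabla\times\bu)$ is $L^2$-orthogonal to $[P_r(T)]^3$, so equality would force both sides to vanish, contradicting the fact that \eqref{errorestinew} is a nontrivial bound. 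In particular your claim that the term vanishes for piecewise constant $\nu$ is wrong, and plain Cauchy--Schwarz gives only $|a(\bQ_h\bphi,\bQ_h\bu-\bu)|\le C\3bar\bQ_h\bphi\3bar_{V_h}\,\3bar\bQ_h\bu-\bu\3bar_{V_h}\le Ch^{k}\|\bu\|_{k+1}\|\bphi\|_{1+\alpha}$, which loses exactly the factor $h^\alpha$ the theorem is about.

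The repair is the paper's estimate of the term $I_1$. Split $\bQ_h\bphi=\bphi+(\bQ_h\bphi-\bphi)$: the piece $\sum_T(\nu\nabla_w\times(\bQ_h\bphi-\bphi),\nabla_w\times(\bQ_h\bu-\bu))_T$ is $O(h^{\alpha})\cdot O(h^{k})$ by \eqref{errorestinew} applied once with exponent $\alpha$ and once with exponent $k$; for the remaining piece $\sum_T(\nu\nabla_w\times\bphi,\nabla_w\times(\bQ_h\bu-\bu))_T$, where now $\nabla_w\times\bphi={\cal Q}_h^{r}(\nabla\times\bphi)$ legitimately holds by \eqref{l}, one needs the orthogonality \eqref{i0}: testing \eqref{discurlcurl} with $\bq=\bQ_0(\nu\nabla_w\times\bphi)\in[P_k(T)]^3$ annihilates both the volume term (since $\bQ_0\bu-\bu\perp[P_k(T)]^3$ and $\nabla\times\bq\in[P_{k-1}(T)]^3$) and the face term (since $(\bQ_b\bu-\bu)\perp[P_k(e)]^3$ and $\bn\times\bq\in[P_k(e)]^3$), so only $\sum_T((I-\bQ_0)(\nu{\cal Q}_h^{r}\nabla\times\bphi),\nabla_w\times(\bQ_h\bu-\bu))_T$ survives, whose first factor is $O(h^{\alpha}\|\bphi\|_{1+\alpha})$. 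This cancellation is the key idea of the paper's proof and is absent from yours. A second, smaller slip: your claimed identity $(\nabla\times(\nu\nabla\times\bphi),\bzeta_0)=a(\bQ_h\bphi,\bzeta_h)+\ell_1^{*}(\bphi,\bzeta_h)$ also drops the volume remainder $\sum_T(\nu\nabla_w\times(\bphi-\bQ_h\bphi),\nabla_w\times\bzeta_h)_T$ (the paper's $I_4$); that term is harmless, being $O(h^{k+\alpha})$ by \eqref{errorestinew} and \eqref{estimate1}, but it must appear in the bookkeeping. By contrast, your use of $\nabla_w\cdot\bQ_h\bphi={\cal Q}_h^{r}(\nabla\cdot\bphi)=0$ in treating $b(\bQ_h\bphi,\epsilon_h)$ is salvageable, because the weak divergence is only an analysis tool and may be computed in degree at most $k$, where projection does commute; the weak curl cannot be downgraded this way, since its degree $r$ is fixed by the scheme itself.
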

    
 \begin{proof}
 Testing the first equation of \eqref{dual}  with $\bzeta_0$ yields:
 \begin{equation}\label{dq} 
  \|\bzeta_0\|^2= \sum_{T\in {\cal T}_h}(\nabla \times (\nu \nabla\times \bphi)-\nabla\xi, \bzeta_0)_T.
\end{equation} 

Letting $\bu=\bphi$, $p=\xi$ and $\bv=\bzeta_h$ in \eqref{eqq3} and \eqref{eqq2}, and letting  $\bv=\bQ_h \bphi$ in \eqref{sehv} and   $q=Q_h\xi$  in \eqref{sehv2}, and using \eqref{l-2}, the above equation can be formulated as follows
\begin{equation}\label{l2est1}
    \begin{split}
 &\|\bzeta_0\|^2\\=& \sum_{T\in {\cal T}_h}(\nu \nabla_w\times   \bphi, \nabla_w\times \bzeta_h)_T-\ell_1(\bphi, \bzeta_h)-\sum_{T\in {\cal T}_h}(\bzeta_0,  \bQ_0(\nabla\xi))_T \\ =& \sum_{T\in {\cal T}_h}(\nu \nabla_w\times   \bphi, \nabla_w\times (\bQ_h\bu-\bu))_T+(\nu \nabla_w\times   \bphi, \nabla_w\times \be_h)_T-\ell_1(\bphi, \bzeta_h)\\&-\sum_{T\in {\cal T}_h}(\bQ_0\bu-\bu, \nabla_w Q_h \xi)_T-\sum_{T\in {\cal T}_h}(\be_0,    \nabla_w Q_h \xi)_T  \\=& \sum_{T\in {\cal T}_h}(\nu \nabla_w\times   \bphi, \nabla_w\times (\bQ_h\bu-\bu))_T\\&+(\nu \nabla_w\times \bQ_h  \bphi, \nabla_w\times \be_h)_T+(\nu \nabla_w\times (\bphi-\bQ_h  \bphi), \nabla_w\times \be_h)_T\\& -\ell_1(\bphi, \bzeta_h) -\sum_{T\in {\cal T}_h}(\bQ_0\bu-\bu,  \nabla_w Q_h  \xi)_T  -\sum_{T\in {\cal T}_h}(\be_0,  \nabla _w Q_h\xi)_T 
 \\=& \sum_{T\in {\cal T}_h} (\nu \nabla_w\times   \bphi, \nabla_w\times (\bQ_h\bu-\bu))_T 
 +\ell_1(\bu, \bQ_h\bphi)+b(\bQ_h \bphi, \epsilon_h)\\&+\sum_{T\in {\cal T}_h}(\nu \nabla_w\times (\bphi-\bQ_h  \bphi), \nabla_w\times \be_h)_T  -\ell_1(\bphi, \bzeta_h) \\
 &-\sum_{T\in {\cal T}_h}(\bQ_0\bu-\bu,  \nabla_wQ_h  \xi)_T  + s(\epsilon_h, Q_h\xi)-s(Q_hp, Q_h\xi)-\ell_2(\bu, Q_h\xi) \\
 =&\sum_{i=1}^{9} I_i.
    \end{split}
\end{equation}

We will estimate $I_i$ for $i=1, \cdots, 9$ individually. For the term $I_1$, using \eqref{discurlcurl} gives
\begin{equation} \label{i0}
      \begin{split}
&  \sum_{T\in {\cal T}_h} (\bQ_0(\nu \nabla_w\times   \bphi), \nabla_w\times (\bQ_h\bu-\bu))_T\\
 = &\sum_{T\in {\cal T}_h}(\bQ_0\bu-\bu, \nabla\times (\bQ_0(\nu \nabla_w\times   \bphi)))_T
  -\langle (\bQ_b\bu- \bu)  \times \bn, \bQ_0(\nu \nabla_w\times   \bphi)\rangle_{\partial T}\\
  =&0,
  \end{split}
\end{equation}     
where we used the properties of the projection operators $\bQ_0$ and $\bQ_b$.
Using \eqref{i0}, Cauchy-Schwarz inequality,  \eqref{l}, and \eqref{errorestinew} gives
\begin{equation*}\label{i1}
      \begin{split}
|I_1|  =&| \sum_{T\in {\cal T}_h} (\nu \nabla_w\times   \bphi), \nabla_w\times (\bQ_h\bu-\bu))_T|\\=&| \sum_{T\in {\cal T}_h} ((I-\bQ_0)(\nu \nabla_w\times   \bphi), \nabla_w\times (\bQ_h\bu-\bu))_T| \\=&| \sum_{T\in {\cal T}_h} ((I-\bQ_0)(\nu {\cal Q}_h^r \nabla\times   \bphi), \nabla_w\times (\bQ_h\bu-\bu))_T| \\
\leq & \Big(\sum_{T\in {\cal T}_h}\|(I-\bQ_0)(\nu {\cal Q}_h^r \nabla\times   \bphi)\|_T^2\Big)^{\frac{1}{2}}\Big(\sum_{T\in {\cal T}_h}\|\nabla_w\times (\bQ_h\bu-\bu)\|_T^2\Big)^{\frac{1}{2}}\\
\leq  & Ch^{k+\alpha} \|\bphi\|_{1+\alpha}\|\bu\|_{k}.
  \end{split}
\end{equation*}

For the term $I_2$, applying the Cauchy-Schwarz inequality, the trace inequality \eqref{trace-inequality}, \eqref{est1} with $r=\alpha$ and $s=0, 1$,  \eqref{est2} with $r=k$ and $s=0, 1$, 
  we obtain
\begin{equation*}
    \begin{split}
   |I_2|=&| \ell_1(\bu, \bQ_h \bphi)|  \leq  
    |\sum_{T\in {\cal T} _h}\langle \nu (\bQ_b\bphi-\bQ_0\bphi)\times \bn, (I-{\cal Q}_h^{r})\nabla\times \bu\rangle_{\partial T}|   \\
    \leq & (\sum_{T\in {\cal T} _h}\| \nu (\bQ_b\bphi-\bQ_0\bphi)\times \bn\|^2_{\partial T})^\frac{1}{2} (\sum_{T\in {\cal T} _h}\| (I-{\cal Q}_h^{r})\nabla\times \bu\|^2_{\partial T})^\frac{1}{2}\\
   \leq & (\sum_{T\in {\cal T} _h}h_T^{-1}\| \bphi-\bQ_0\bphi \|^2_{T}+h_T\| \bphi-\bQ_0\bphi \|^2_{1, T})^\frac{1}{2} \\&\cdot (\sum_{T\in {\cal T} _h}h_T^{-1}\| (I-{\cal Q}_h^{r})\nabla\times \bu\|^2_{T}+h_T \| (I-{\cal Q}_h^{r})\nabla\times \bu\|^2_{1, T})^\frac{1}{2}\\   
  \leq & Ch^{k+\alpha} \|\bu\|_{k+1}\|\bphi\|_{1+\alpha}.
    \end{split}
\end{equation*}

Regarding to $I_3$,  it follows from \eqref{disgradient*}, the usual integration by parts, the second equation in \eqref{dual} that 
  \begin{equation*}\label{l2est2}
      \begin{split}
  I_3= b(\bQ_h\bphi, \epsilon_h)=&\sum_{T\in {\cal T}_h}(\bQ_0\bphi, \nabla_w\epsilon_h)_T
   \\
  =&\sum_{T\in {\cal T}_h}  (\nabla \epsilon_0, \bQ_0\bphi)_T-\langle \epsilon_0-\epsilon_b, \bQ_0\bphi\cdot\bn\rangle_{\partial T}\\
 =&\sum_{T\in {\cal T}_h}  (\nabla \epsilon_0,  \bphi)_T-\langle \epsilon_0-\epsilon_b, \bQ_0\bphi\cdot\bn\rangle_{\partial T}\\
 =&\sum_{T\in {\cal T}_h}  -( \epsilon_0,  \nabla \cdot\bphi)_T+\langle \epsilon_0, \bphi\cdot\bn\rangle_{\partial T}-\langle \epsilon_0-\epsilon_b, \bQ_0\bphi\cdot\bn\rangle_{\partial T}\\ 
  =&\sum_{T\in {\cal T}_h}    \langle \epsilon_0-\epsilon_b, \bphi\cdot\bn\rangle_{\partial T}-\langle \epsilon_0-\epsilon_b, \bQ_0\bphi\cdot\bn\rangle_{\partial T}\\ 
  =&\ell_2(\bphi, \epsilon_h),
  \end{split}
  \end{equation*}
where we used $\sum_{T\in {\cal T}_h}\langle\epsilon_b, \bphi\cdot\bn\rangle_{\partial T} =\langle \epsilon_b, \bphi\cdot\bn\rangle_{\partial\Omega}=0$ due to $\epsilon_b=0$ on $\partial\Omega$.
Using the Cauchy-Schwarz inequality, the trace inequality \eqref{trace-inequality}, \eqref{est1} with $r=\alpha$ and $s=0, 1$,  the error estimate \eqref{estimate1},  we obtain
 \begin{equation*}
     \begin{split}
 |I_3|=&|\ell_2(\bphi, \epsilon_h)| = |  \sum_{T\in{\cal T}_h} \langle \epsilon_0-\epsilon_b, (I-\bQ_0)\bphi \cdot\bn\rangle_{\pT} |\\
 \leq &(\sum_{T\in {\cal T} _h}h_T\|\epsilon_0-\epsilon_b\|^2_{\partial T})^\frac{1}{2} (\sum_{T\in {\cal T} _h}h_T^{-1}\|(I-\bQ_0)\bphi \cdot\bn\|^2_{\partial T})^\frac{1}{2}\\
 \leq & \3bar \epsilon_h\3bar_{W_h} (\sum_{T\in {\cal T} _h}h_T^{-2}\|(I-\bQ_0)\bphi \cdot\bn\|^2_{T}+\|(I-\bQ_0)\bphi \cdot\bn\|^2_{1, T})^\frac{1}{2}\\
 \leq & Ch^{k+\alpha}(\|\bu\|_{k+1}+\|p\|_k) \|\bphi\|_{1+\alpha}.
     \end{split}
 \end{equation*}

For the term $I_4$, using Cauchy-Schwarz inequality, \eqref{errorestinew} with $k=\alpha$, \eqref{estimate1}, we have
 \begin{equation*}
     \begin{split}
    |I_4|= &|\sum_{T\in {\cal T} _h} (\nu \nabla_w\times (\bphi-\bQ_h  \bphi), \nabla_w\times \be_h)_T|\\
   \leq & (\sum_{T\in {\cal T} _h} \|\nu \nabla_w\times (\bphi-\bQ_h  \bphi)\|_T^2)^{\frac{1}{2}}  (\sum_{T\in {\cal T} _h} \| \nabla_w\times \be_h\|_T|^2)^{\frac{1}{2}} \\
   \leq  &  \3bar \bphi-\bQ_h  \bphi\3bar_{V_h}  \3bar \be_h  \3bar_{V_h}
\\
\leq &Ch^{k+\alpha}\|\bphi\|_{1+\alpha} (\|\bu\|_{k+1}+\|p\|_k).
     \end{split}
 \end{equation*}

For the term $I_5$, using the Cauchy-Schwarz inequality, \eqref{normeq}, the trace inequality \eqref{trace-inequality}, \eqref{est2} with $r=\alpha$ and $s=0, 1$,  the estimate \eqref{estimate1}, we have
\begin{equation*}
    \begin{split}
 |I_5|=&|\ell_1(\bphi, \bzeta_h)|=  \sum_{T\in {\cal T} _h}\langle \nu (\be_b-\be_0)\times \bn, (I-{\cal Q}_h^{r})\nabla\times \bphi\rangle_{\partial T}\\
 \leq  & (\sum_{T\in {\cal T} _h}h_T^{-1}\|  \nu (\be_b-\be_0)\times \bn\|^2_{\partial T})^\frac{1}{2} (\sum_{T\in {\cal T} _h} h_T\|(I-{\cal Q}_h^{r})\nabla\times \bphi\|^2_{\partial T})^\frac{1}{2} \\ 
   \leq & \|\be_h\|_{1,h} (\sum_{T\in {\cal T} _h} \|(I-{\cal Q}_h^{r})\nabla\times \bphi\|^2_{T}+h_T^2\|(I-{\cal Q}_h^{r})\nabla\times \bphi\|^2_{T})^\frac{1}{2} \\
   \leq & C\3bar \be_h\3bar_{V_h} h^\alpha\|\bphi\|_{1+\alpha} 
   \leq Ch^{k+\alpha}(\|\bu\|_{1+k}+\|p\|_k)\|\bphi\|_{1+\alpha}.
   \end{split}
\end{equation*}

For the term $I_6$, using the Cauchy-Schwarz inequality,  \eqref{l-2},  we get
\begin{equation*}
    \begin{split}
|I_6|=&|\sum_{T\in {\cal T}_h}(\bQ_0\bu-\bu,  \nabla_w Q_h  \xi)_T|
\\=& |\sum_{T\in {\cal T}_h}(\bQ_0\bu-\bu,   \bQ_0\nabla    \xi)_T|\\
=&0.
 \end{split}
\end{equation*} 

 For the term $I_7$, using the Cauchy-Schwarz inequality, the trace inequality \eqref{trace-inequality}, \eqref{est4} with $r=\alpha$ and $s=0, 1$,  the error estimate \eqref{estimate1}, we have
\begin{equation*}
    \begin{split}
   |I_7|=&|s(\epsilon_h, Q_h\xi)|=|\sum_{T\in {\cal T}_h} h_T\langle \epsilon_0-\epsilon_b, Q_0\xi-Q_b\xi\rangle_{\partial T}|\\
   \leq & (\sum_{T\in {\cal T} _h}h_T\|\epsilon_0-\epsilon_b\|^2_{\partial T})^\frac{1}{2} (\sum_{T\in {\cal T} _h}h_T \|Q_0\xi-Q_b\xi\|^2_{\partial T})^\frac{1}{2}\\
   \leq & C\3bar \epsilon_h\3bar_{W_h}(\sum_{T\in {\cal T} _h} \|Q_0\xi- \xi\|^2_{T}+h_T^{2}\|Q_0\xi- \xi\|^2_{1, T})^\frac{1}{2}\\
   \leq & Ch^{k+\alpha}(\|\bu\|_{k+1}+\|p\|_k) \|\xi\|_{\alpha}.
    \end{split}
\end{equation*}
 
 For the term $I_8$, using the Cauchy-Schwarz inequality, the trace inequality \eqref{trace-inequality}, \eqref{est4} with $r=\alpha$ and $r=k$ and $s=0, 1$,  we have
 \begin{equation*}
          \begin{split}
             |I_8|=&|s(Q_h p, Q_h\xi)|=
        |\sum_{T\in {\cal T}_h} h_T\langle Q_0p-Q_bp, Q_0\xi-Q_b\xi\rangle_{\partial T}|\\
     \leq  & (\sum_{T\in {\cal T} _h}h_T\|Q_0p-Q_bp\|^2_{\partial T})^\frac{1}{2} (\sum_{T\in {\cal T} _h}h_T \|Q_0\xi-Q_b\xi\|^2_{\partial T})^\frac{1}{2}
     \\ \leq&  (\sum_{T\in {\cal T} _h} \|Q_0p-p\|^2_{ T}+h_T^2\|Q_0p- p\|^2_{1, T})^\frac{1}{2} \\&\cdot(\sum_{T\in {\cal T} _h} \|Q_0\xi-\xi\|^2_{ T}+h_T^2 \|Q_0\xi-\xi\|^2_{1, T})^\frac{1}{2}\\
  \leq&Ch^{k+\alpha} \|p\|_k \|\xi\|_{\alpha}.
     \end{split}
      \end{equation*}

  For the term $I_9$, using  the Cauchy-Schwarz inequality, the trace inequality \eqref{trace-inequality}, \eqref{est1} with $r=k$ and $s=0, 1$,  \eqref{est4} with $r=\alpha$ and $s=0, 1$,  we have
  \begin{equation*}
      \begin{split}
     |I_9|=& |\ell_2(\bu, Q_h\xi)| =|\sum_{T\in{\cal T}_h} \langle Q_0\xi-Q_b\xi, (I-\bQ_0)\bu \cdot\bn\rangle_{\pT}|\\    
   \leq  & (\sum_{T\in {\cal T} _h}\|Q_0\xi-Q_b\xi\|^2_{\partial T})^\frac{1}{2} (\sum_{T\in {\cal T} _h} \|(I-\bQ_0)\bu \cdot\bn\|^2_{\partial T})^\frac{1}{2} \\
   \leq  & (\sum_{T\in {\cal T} _h}h_T^{-1}\|Q_0\xi-\xi\|^2_{ T}+h_T\|Q_0\xi-\xi\|^2_{1, T})^\frac{1}{2} \\&\cdot(\sum_{T\in {\cal T} _h} h_T^{-1}\|(I-\bQ_0)\bu \cdot\bn\|^2_{T}+h_T\|(I-\bQ_0)\bu \cdot\bn\|^2_{1, T})^\frac{1}{2}\\
   \leq &  Ch^{k+\alpha} \|\bu\|_{k+1} \|\xi\|_{\alpha}.  
   \end{split}
  \end{equation*}

    Substituting the estimates of $I_i$ for $i=1, \cdots, 9$ into \eqref{l2est1} and using the regularity assumption \eqref{regu} gives
    $$
    \|\bzeta_0\|^2\leq Ch^{k+\alpha}(\|\bu\|_{1+k}+\|p\|_k)\|\bzeta_0\|.
    $$
This, together with the triangle inequality, we have
      \begin{equation*}
      \begin{split}
            \|\be_0\|\leq & \|\bzeta_0\|+\|\bu-\bQ_0\bu\|\\\leq &   Ch^{k+\alpha}(\|\bu\|_{1+k}+\|p\|_k)+Ch^{k+1}\|\bu\|_{k+1}\\\leq &  Ch^{k+\alpha}(\|\bu\|_{1+k}+\|p\|_k).
      \end{split}
      \end{equation*}

   This completes the proof of the theorem.
 \end{proof}

\section{Numerical test}
In the first test,  we solve \eqref{weakform} with $\nu=1$ on the unit square domain $\Omega
  =(0,1)\times(0,1)$.
We choose $\b f$ and $g$ such that the exact solution is
\an{\label{e1} \b u=\p{ 4(x^2-2x^3+x^4)(2y-6y^2+4y^3) \\ -4(y^2-2y^3+y^4)(2x-6x^2+4x^3)}, \
                p=(x-x^2)(y-y^2).  }

     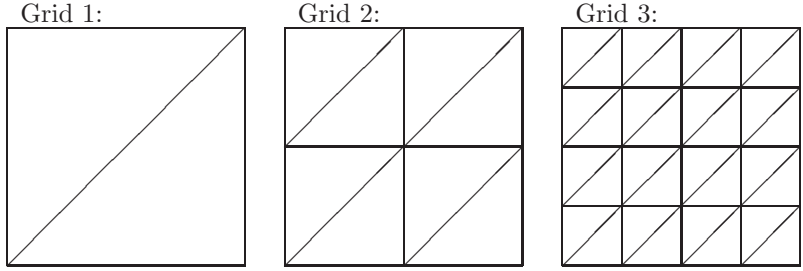
\begin{figure}[H] \setlength\unitlength{1pt}\begin{center}
    \begin{picture}(300,100)(0,0)
     \def\mc{\begin{picture}(90,90)(0,0)
       \put(0,0){\line(1,0){90}} \put(0,90){\line(1,0){90}}
      \put(0,0){\line(0,1){90}}  \put(90,0){\line(0,1){90}}  \put(0,0){\line(1,1){90}}  
      \end{picture}}

    \put(0,0){\mc} \put(5,92){Grid 1:} \put(110,92){Grid 2:}\put(215,92){Grid 3:}
      \put(105,0){\setlength\unitlength{0.5pt}\begin{picture}(90,90)(0,0)
    \put(0,0){\mc}\put(90,0){\mc}\put(0,90){\mc}\put(90,90){\mc}\end{picture}}
      \put(210,0){\setlength\unitlength{0.25pt}\begin{picture}(90,90)(0,0)
    \multiput(0,0)(90,0){4}{\multiput(0,0)(0,90){4}{\mc}} \end{picture}}
    \end{picture}
 \caption{The first three grids for the computation in
    Tables \ref{t-1}--\ref{t-2}. }\label{grid1} 
    \end{center} \end{figure}

We compute the finite element solutions for \eqref{e1} on uniform 
       triangular grids shown in Figure \ref{grid1} by 
  the  $P_k$/$P_{k-1}$ WG finite elements, defined in \eqref{Wk}--\eqref{Vk}, 
      for $k=1,2,3,4$ and $5$.
The results are listed in Tables \ref{t-1}--\ref{t-2}. 
The optimal order of convergence is achieved for all solutions in all norms.

\begin{table}[H]
  \centering  \renewcommand{\arraystretch}{1.1}
  \caption{The error and the computed order of convergence  ($k=1,2,3$)
     for the solution \eqref{e1} on Figure \ref{grid1} meshes. }
  \label{t-1}
\begin{tabular}{c|cc|cc|cc}
\hline
Grid &  $\|\b u-\b u_0\|_0$  & $O(h^r)$ & $\3bar\b u-\b u_h\3bar$  & $O(h^r)$ &
      $\|p -p_0\|_0$  & $O(h^r)$    \\
\hline&\multicolumn{6}{c}{ By the $P_1$/$P_{0}$ WG finite element.}\\
\hline 
 6&    0.121E-03 &  2.0&    0.106E-01 &  1.0&    0.738E-02 &  1.0 \\
 7&    0.302E-04 &  2.0&    0.532E-02 &  1.0&    0.372E-02 &  1.0 \\
 8&    0.754E-05 &  2.0&    0.266E-02 &  1.0&    0.186E-02 &  1.0 \\
\hline&\multicolumn{6}{c}{ By the $P_2$/$P_{1}$ WG finite element.}\\
\hline 
 5&    0.173E-04 &  3.0&    0.105E-02 &  2.0&    0.381E-03 &  2.0 \\
 6&    0.219E-05 &  3.0&    0.263E-03 &  2.0&    0.948E-04 &  2.0 \\
 7&    0.275E-06 &  3.0&    0.658E-04 &  2.0&    0.236E-04 &  2.0 \\ 
\hline&\multicolumn{6}{c}{ By the $P_3$/$P_{2}$ WG finite element.}\\
\hline 
 4&    0.920E-05 &  3.9&    0.575E-03 &  2.9&    0.934E-04 &  2.9 \\
 5&    0.596E-06 &  3.9&    0.733E-04 &  3.0&    0.122E-04 &  2.9 \\
 6&    0.378E-07 &  4.0&    0.921E-05 &  3.0&    0.156E-05 &  3.0 \\
\hline
    \end{tabular}%
\end{table}%

\begin{table}[H]
  \centering  \renewcommand{\arraystretch}{1.1}
  \caption{The error and the computed order of convergence ($k=4,5$)
     for the solution \eqref{e1} on Figure \ref{grid1} meshes. }
  \label{t-2}
\begin{tabular}{c|cc|cc|cc}
\hline
Grid &  $\|\b u-\b u_0\|_0$  & $O(h^r)$ & $\3bar\b u-\b u_h\3bar$  & $O(h^r)$ &
      $\|p -p_0\|_0$  & $O(h^r)$    \\
\hline&\multicolumn{6}{c}{ By the $P_4$/$P_{3}$ WG finite element.}\\
\hline 
 4&    0.682E-06 &  4.9&    0.689E-04 &  3.9&    0.428E-05 &  4.0 \\
 5&    0.217E-07 &  5.0&    0.437E-05 &  4.0&    0.268E-06 &  4.0 \\
 6&    0.923E-09 &  4.6&    0.556E-06 &  3.0&    0.168E-07 &  4.0 \\
\hline&\multicolumn{6}{c}{ By the $P_5$/$P_{4}$ WG finite element.}\\
\hline 
 3&    0.208E-05 &  5.8&    0.158E-03 &  4.9&    0.413E-05 &  4.6 \\
 4&    0.336E-07 &  6.0&    0.504E-05 &  5.0&    0.139E-06 &  4.9 \\
 5&    0.990E-09 &  5.1&    0.443E-06 &  3.5&    0.568E-08 &  4.6 \\
\hline
    \end{tabular}%
\end{table}%

We next compute the finite element solutions for \eqref{e1} on non-convex polygonal 
        grids shown in Figure \ref{grid2} by 
  the  $P_k$/$P_{k-1}$ WG finite elements, defined in \eqref{Wk}--\eqref{Vk}, 
      for $k=1,2,3$ and $4$.
The results are listed in Tables \ref{t-3}--\ref{t-4}. 
The optimal order of convergence is achieved for all solutions in all norms.

     \begin{figure}[H] \setlength\unitlength{1.2pt}\begin{center}
    \begin{picture}(280,98)(0,0)
     \def\mc{\begin{picture}(90,90)(0,0)
       \put(0,0){\line(1,0){90}} \put(0,90){\line(1,0){90}}
      \put(0,0){\line(0,1){90}}  \put(90,0){\line(0,1){90}} 
      
       \put(15,60){\line(2,-1){60}}  
       \put(0,0){\line(5,2){75}}         \put(90,90){\line(-5,-2){75}}  
      \end{picture}}

    \put(0,0){\mc} \put(0,92){Grid 1:} \put(95,92){Grid 2:}  \put(190,92){Grid 3:}
    
      \put(95,0){\setlength\unitlength{0.6pt}\begin{picture}(90,90)(0,0)
    \put(0,0){\mc}\put(90,0){\mc}\put(0,90){\mc}\put(90,90){\mc}\end{picture}}
     \put(190,0){\setlength\unitlength{0.3pt}\begin{picture}(90,90)(0,0)
      \multiput(0,0)(90,0){4}{\multiput(0,0)(0,90){4}{\mc}} \end{picture}}
    \end{picture}
 \caption{The first three non-convex polygonal grids for the computation in
    Tables \ref{t-3}--\ref{t-4}. }\label{grid2} 
    \end{center} \end{figure}
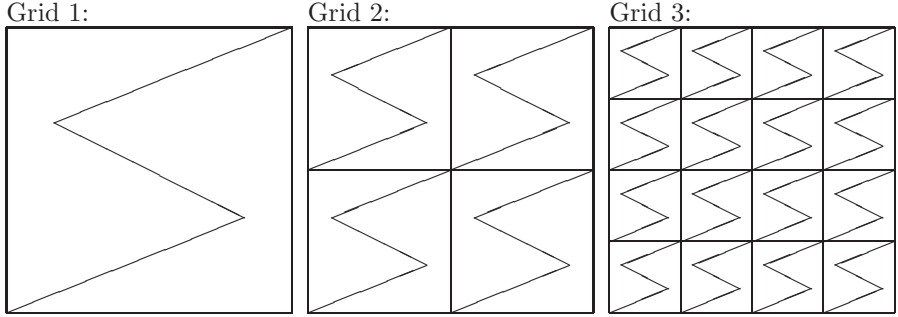

\begin{table}[H]
  \centering  \renewcommand{\arraystretch}{1.1}
  \caption{The error and the computed order of convergence ($k=1,2$)
     for the solution \eqref{e1} on Figure \ref{grid2} meshes. }
  \label{t-3}
\begin{tabular}{c|cc|cc|cc}
\hline
Grid &  $\|\b u-\b u_0\|_0$  & $O(h^r)$ & $\3bar\b u-\b u_h\3bar$  & $O(h^r)$ &
      $\|p -p_0\|_0$  & $O(h^r)$    \\
\hline&\multicolumn{6}{c}{ By the $P_1$/$P_{0}$ WG finite element.}\\
\hline 
 5&    0.174E-02 &  4.3&    0.168E+00 &  2.2&    0.707E-01 &  3.3 \\
 6&    0.102E-02 &  0.8&    0.111E+00 &  0.6&    0.809E-01 &  0.0 \\
 7&    0.398E-03 &  1.4&    0.743E-01 &  0.6&    0.637E-01 &  0.3 \\
 8&    0.282E-04 &  3.8&    0.211E-01 &  1.8&    0.872E-02 &  2.9 \\
\hline&\multicolumn{6}{c}{ By the $P_2$/$P_{1}$ WG finite element.}\\
\hline 
 5&    0.672E-04 &  3.0&    0.203E-01 &  2.0&    0.135E-02 &  2.0 \\
 6&    0.845E-05 &  3.0&    0.509E-02 &  2.0&    0.334E-03 &  2.0 \\
 7&    0.106E-05 &  3.0&    0.127E-02 &  2.0&    0.828E-04 &  2.0 \\
\hline
    \end{tabular}%
\end{table}%

\begin{table}[H]
  \centering  \renewcommand{\arraystretch}{1.1}
  \caption{The error and the computed order of convergence ($k=3,4$)
     for the solution \eqref{e1} on Figure \ref{grid2} meshes. }
  \label{t-4}
\begin{tabular}{c|cc|cc|cc}
\hline
Grid &  $\|\b u-\b u_0\|_0$  & $O(h^r)$ & $\3bar\b u-\b u_h\3bar$  & $O(h^r)$ &
      $\|p -p_0\|_0$  & $O(h^r)$    \\
\hline&\multicolumn{6}{c}{ By the $P_3$/$P_{2}$ WG finite element.}\\
\hline 
 3&    0.167E-02 &  4.3&    0.142E+00 &  2.4&    0.137E-01 &  3.8 \\
 4&    0.200E-03 &  3.1&    0.200E-01 &  2.8&    0.435E-02 &  1.7 \\
 5&    0.136E-04 &  3.9&    0.257E-02 &  3.0&    0.598E-03 &  2.9 \\
 6&    0.634E-06 &  4.4&    0.319E-03 &  3.0&    0.512E-04 &  3.5 \\
\hline&\multicolumn{6}{c}{ By the $P_4$/$P_{3}$ WG finite element.}\\
\hline 
 3&    0.524E-03 &  4.6&    0.642E-01 &  3.7&    0.423E-02 &  3.3 \\
 4&    0.176E-04 &  4.9&    0.420E-02 &  3.9&    0.308E-03 &  3.8 \\
 5&    0.562E-06 &  5.0&    0.267E-03 &  4.0&    0.201E-04 &  3.9 \\
\hline
    \end{tabular}%
\end{table}%

We next compute the finite element solutions for \eqref{e1} on non-convex polygonal 
        grids shown in Figure \ref{grid3} by 
  the  $P_k$/$P_{k-1}$ WG finite elements, defined in \eqref{Wk}--\eqref{Vk}, 
      for $k=1,2$ and $3$.
The results are listed in Table  \ref{t-5}. 
The optimal order of convergence is achieved for all solutions in all norms.

     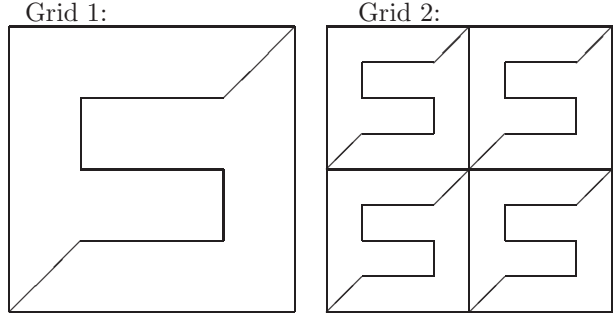
\begin{figure}[H] \setlength\unitlength{1.2pt}\begin{center}
    \begin{picture}(190,99)(0,0)
     \def\mc{\begin{picture}(90,90)(0,0)
       \put(0,0){\line(1,0){90}} \put(0,90){\line(1,0){90}}
      \put(0,0){\line(0,1){90}}  \put(90,0){\line(0,1){90}} 
      
       \put(22.5,22.5){\line(1,0){45}}  \put(67.5,22.5){\line(0,1){22.5}}  
       \put(22.5,45){\line(1,0){45}}  \put(22.5,45){\line(0,1){22.5}}  
       \put(22.5,67.5){\line(1,0){45}}  
       \put(0,0){\line(1,1){22.5}}         \put(90,90){\line(-1,-1){22.5}}  
      \end{picture}}

    \put(0,0){\mc} \put(5,92){Grid 1:} \put(110,92){Grid 2:} 
      \put(100,0){\setlength\unitlength{0.6pt}\begin{picture}(90,90)(0,0)
    \put(0,0){\mc}\put(90,0){\mc}\put(0,90){\mc}\put(90,90){\mc}\end{picture}}
    \end{picture}
 \caption{The first two grids for the computation in
    Table \ref{t-5}. }\label{grid3} 
    \end{center} \end{figure}

\begin{table}[H]
  \centering  \renewcommand{\arraystretch}{1.1}
  \caption{The error and the computed order of convergence ($k=1,2,3$)
     for the solution \eqref{e1} on Figure \ref{grid3} meshes. }
  \label{t-5}
\begin{tabular}{c|cc|cc|cc}
\hline
Grid &  $\|\b u-\b u_0\|_0$  & $O(h^r)$ & $\3bar\b u-\b u_h\3bar$  & $O(h^r)$ &
      $\|p -p_0\|_0$  & $O(h^r)$    \\
\hline&\multicolumn{6}{c}{ By the $P_1$/$P_{0}$ WG finite element.}\\
\hline 
 6&    0.265E-03 &  2.0&    0.798E-01 &  1.0&    0.124E-01 &  1.0 \\
 7&    0.662E-04 &  2.0&    0.400E-01 &  1.0&    0.620E-02 &  1.0 \\
 8&    0.166E-04 &  2.0&    0.200E-01 &  1.0&    0.311E-02 &  1.0 \\
\hline&\multicolumn{6}{c}{ By the $P_2$/$P_{1}$ WG finite element.}\\
\hline 
 5&    0.168E-03 &  3.4&    0.183E-01 &  2.0&    0.562E-02 &  2.4 \\
 6&    0.373E-04 &  2.2&    0.507E-02 &  1.8&    0.258E-02 &  1.1 \\
 7&    0.608E-05 &  2.6&    0.138E-02 &  1.9&    0.850E-03 &  1.6 \\ 
\hline&\multicolumn{6}{c}{ By the $P_3$/$P_{2}$ WG finite element.}\\
\hline 
 4&    0.115E-03 &  3.8&    0.144E-01 &  2.8&    0.135E-02 &  2.7 \\
 5&    0.751E-05 &  3.9&    0.185E-02 &  3.0&    0.187E-03 &  2.8 \\
 6&    0.581E-06 &  3.7&    0.275E-03 &  2.8&    0.237E-04 &  3.0 \\
\hline
    \end{tabular}%
\end{table}%

In the 3D numerical computation,  the domain for problem \eqref{weakform}
   is the unit cube $\Omega=(0,1)\times(0,1)\times(0,1)$.
We choose an $\b f$ and a $g$ in \eqref{weakform} so that the exact solution is
\an{\label{e3} \ad{  
  \b u &=\p{-2^{10}(x-1)^2x^2(y-1)^2y^2(z  -3z^2 +2z^3 ) \\
           \ \; 2^{10}(x-1)^2x^2(y-1)^2y^2(z  -3z^2 +2z^3 )\\
            2^{10} [(x -3x^2 +2x^3 )(y^2-y)^2-(x^2-x)^2(y-3y^2 +2y^3 )](z^2-z)^2},\\
            p&= (x-x^2)(y-y^2)(z-z^2). }  }

We   compute the finite element solutions for \eqref{e3} on the
        grids shown in Figure \ref{grid4} by 
  the  $P_k$/$P_{k-1}$ WG finite elements, defined in \eqref{Wk}--\eqref{Vk}, 
      for $k=1,2$ and $3$.
The results are listed in Table  \ref{t-6}. 
The optimal order of convergence is achieved for all solutions in all norms.

\begin{figure}[H] 
\begin{center}
 \setlength\unitlength{1pt}
    \begin{picture}(320,118)(0,3)
    \put(0,0){\begin{picture}(110,110)(0,0) \put(25,102){Grid 1:}
       \multiput(0,0)(80,0){2}{\line(0,1){80}}  \multiput(0,0)(0,80){2}{\line(1,0){80}}
       \multiput(0,80)(80,0){2}{\line(1,1){20}} \multiput(0,80)(20,20){2}{\line(1,0){80}}
       \multiput(80,0)(0,80){2}{\line(1,1){20}}  \multiput(80,0)(20,20){2}{\line(0,1){80}}
    \put(80,0){\line(-1,1){80}}\put(80,0){\line(1,5){20}}\put(80,80){\line(-3,1){60}}
      \end{picture}}
    \put(110,0){\begin{picture}(110,110)(0,0)\put(25,102){Grid 2:}
       \multiput(0,0)(40,0){3}{\line(0,1){80}}  \multiput(0,0)(0,40){3}{\line(1,0){80}}
       \multiput(0,80)(40,0){3}{\line(1,1){20}} \multiput(0,80)(10,10){3}{\line(1,0){80}}
       \multiput(80,0)(0,40){3}{\line(1,1){20}}  \multiput(80,0)(10,10){3}{\line(0,1){80}}
    \put(80,0){\line(-1,1){80}}\put(80,0){\line(1,5){20}}\put(80,80){\line(-3,1){60}}
       \multiput(40,0)(40,40){2}{\line(-1,1){40}} 
        \multiput(80,40)(10,-30){2}{\line(1,5){10}}
        \multiput(40,80)(50,10){2}{\line(-3,1){30}}
      \end{picture}}
    \put(220,0){\begin{picture}(110,110)(0,0) \put(25,102){Grid 3:}
       \multiput(0,0)(20,0){5}{\line(0,1){80}}  \multiput(0,0)(0,20){5}{\line(1,0){80}}
       \multiput(0,80)(20,0){5}{\line(1,1){20}} \multiput(0,80)(5,5){5}{\line(1,0){80}}
       \multiput(80,0)(0,20){5}{\line(1,1){20}}  \multiput(80,0)(5,5){5}{\line(0,1){80}}
    \put(80,0){\line(-1,1){80}}\put(80,0){\line(1,5){20}}\put(80,80){\line(-3,1){60}}
       \multiput(40,0)(40,40){2}{\line(-1,1){40}} 
        \multiput(80,40)(10,-30){2}{\line(1,5){10}}
        \multiput(40,80)(50,10){2}{\line(-3,1){30}}

       \multiput(20,0)(60,60){2}{\line(-1,1){20}}   \multiput(60,0)(20,20){2}{\line(-1,1){60}} 
        \multiput(80,60)(15,-45){2}{\line(1,5){5}} \multiput(80,20)(5,-15){2}{\line(1,5){15}}
        \multiput(20,80)(75,15){2}{\line(-3,1){15}}\multiput(60,80)(25,5){2}{\line(-3,1){45}}
      \end{picture}}

    \end{picture} 
    \end{center} 
\caption{ The first three grids for the computation 
    in Table  \ref{t-6}.  } 
\label{grid4}
\end{figure}
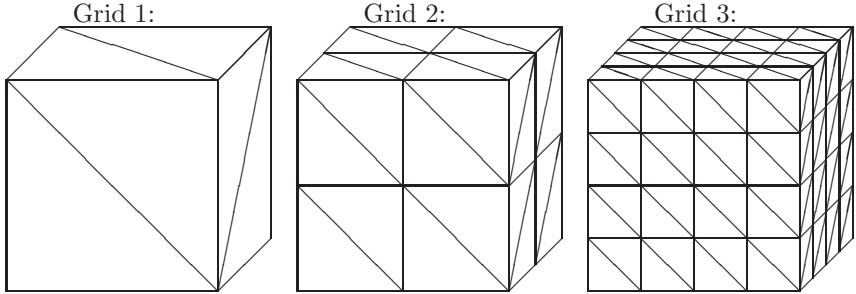

\begin{table}[H]
  \centering  \renewcommand{\arraystretch}{1.1}
  \caption{The error and the computed order of convergence ($k=1,2,3$)
     for the 3D solution \eqref{e3} on Figure \ref{grid4} meshes. }
  \label{t-6}
\begin{tabular}{c|cc|cc|cc}
\hline
Grid &  $\|\b u-\b u_0\|_0$  & $O(h^r)$ & $\3bar\b u-\b u_h\3bar$  & $O(h^r)$ &
      $\|p -p_0\|_0$  & $O(h^r)$    \\
\hline&\multicolumn{6}{c}{ By the $P_1$/$P_{0}$ WG finite element.}\\
\hline 
 3 &   0.273E-01 &1.95 &   0.627E+00 &0.65 &   0.117E-01 &2.40 \\
 4 &   0.734E-02 &1.89 &   0.341E+00 &0.88 &   0.320E-02 &1.87 \\
 5 &   0.212E-02 &1.80 &   0.176E+00 &0.96 &   0.828E-03 &1.95 \\
\hline&\multicolumn{6}{c}{ By the $P_2$/$P_{1}$ WG finite element.}\\
\hline 
 3 &   0.562E-02 &3.04 &   0.157E+00 &1.75 &   0.230E-02 &3.04 \\
 4 &   0.697E-03 &3.01 &   0.427E-01 &1.88 &   0.247E-03 &3.22 \\
 5 &   0.983E-04 &2.83 &   0.109E-01 &1.97 &   0.278E-04 &3.15 \\
\hline&\multicolumn{6}{c}{ By the $P_3$/$P_{2}$ WG finite element.}\\
\hline 
 2 &   0.160E-01 &2.98 &   0.207E+00 &1.96 &   0.656E-02 &3.05 \\
 3 &   0.952E-03 &4.07 &   0.333E-01 &2.63 &   0.440E-03 &3.90 \\
 4 &   0.593E-04 &4.01 &   0.449E-02 &2.89 &   0.326E-04 &3.75 \\
\hline
    \end{tabular}%
\end{table}%

\bibliographystyle{abbrv}
\bibliography{Ref}

\end{document}